\newtheorem{theorem}{Theorem}
\newtheorem{lemma}[theorem]{Lemma}
\newtheorem{corollary}[theorem]{Corollary}
\theoremstyle{definition}
\newcommand{\rc}[3]{\filldraw[fill opacity=#3] (#1,#2) rectangle (#1+5,#2+3);}
\newcommand{\ar}[3]{\rc{#1}{#2}{#3} \rc{#1+5}{#2-1}{#3}\rc{#1+10}{#2-2}{#3}
\rc{#1+1}{#2+3}{#3} \rc{#1+6}{#2+2}{#3}\rc{#1+11}{#2+1}{#3}
\rc{#1+2}{#2+6}{#3} \rc{#1+7}{#2+5}{#3}\rc{#1+12}{#2+4}{#3}
\rc{#1+3}{#2+9}{#3} \rc{#1+8}{#2+8}{#3}\rc{#1+13}{#2+7}{#3}}
\newcommand{\permrect}[2]{\filldraw[fill opacity=0.6] (#1,#2)--(#1+6,#2)--(#1+6,#2+6)--(#1,#2+6)--cycle;
\draw (#1+0.5,#2+0.5) node [vertex]{};}
\newcommand{\permrectangle}[2]{
\permrect{#1}{#2}
\permrect{#1+40}{#2}
\permrect{#1}{#2+40}
\permrect{#1+40}{#2+40}
\permrect{#1-40}{#2}
\permrect{#1-40}{#2-40}
\permrect{#1}{#2-40}
\permrect{#1-40}{#2+40}
\permrect{#1+40}{#2-40}
}
\begin{document}
\title{Permutations
that separate close elements, and rectangle packings in the torus}
\author[1]{Simon R.\ Blackburn}
\author[2]{Tuvi Etzion\thanks{Supported in part by the Israeli Science Foundation
under grant no. 222/19.}}
\affil[1]{Department of Mathematics, Royal Holloway, University of London, Egham, Surrey TW20 0EX, United Kingdom. \texttt{s.blackburn@rhul.ac.uk}}
\affil[2]{Computer Science Department, Technion–Israel Institute of Technology, Haifa 32000, Israel. \texttt{etzion@cs.technion.ac.il}}

\maketitle

\begin{abstract}
Let $n$, $s$ and $k$ be positive integers. For distinct $i,j\in\mathbb{Z}_n$, define $||i,j||_n$ to be the distance between $i$ and $j$ when the elements of $\mathbb{Z}_n$ are written in a circle. So
\[
||i,j||_n=\min\{(i-j)\bmod n,(j-i)\bmod n\}.
\]
A permutation $\pi:\mathbb{Z}_n\rightarrow\mathbb {Z}_n$ is \emph{$(s,k)$-clash-free} if $||\pi(i),\pi(j)||_n\geq k$ whenever $||i,j||_n<s$. So an $(s,k)$-clash-free permutation moves every pair of close elements (at distance less than $s$) to a pair of elements at large distance (at distance at least $k$). The notion of an $(s,k)$-clash-free permutation can be reformulated in terms of certain packings of $s\times k$ rectangles on an $n\times n$ torus.

For integers $n$ and $k$ with $1\leq k<n$, let $\sigma(n,k)$ be the largest value of $s$ such that an $(s,k)$-clash-free permutation of $\mathbb{Z}_n$ exists. 
Strengthening a recent paper of Blackburn, which proved a conjecture of Mammoliti and Simpson, we determine the value of $\sigma(n,k)$ in all cases.
\end{abstract}

\section{Introduction}
\label{sec:introduction}

Let $n$, $s$ and $k$ be positive integers. As in the abstract, for $i,j\in\mathbb{Z}_n$ we define $||i,j||_n$ to be the distance between $i$ and $j$ when the elements of $\mathbb{Z}_n$ are written in a circle. In other words, $||i,j||_n$ is the Lee metric modulo $n$, and $||i,j||_n=\min\{(i-j)\bmod n,(j-i)\bmod n\}$. Let $\pi:\mathbb{Z}_n\rightarrow\mathbb {Z}_n$ be a permutation of $\mathbb{Z}_n$. An \emph{$(s,k)$-clash} is a pair of distinct elements $i,j\in \mathbb{Z}_n$ such that $||i,j||_n<s$ and $||\pi(i),\pi(j)||_n<k$. (So $i$ and $j$, and their image under $\pi$ are both close.) The permutation $\pi$ is \emph{$(s,k)$-clash-free} if there are no $(s,k)$-clashes.

For integers $n$ and $k$ with $1\leq k<n$, we define $\sigma(n,k)$ be the largest value of $s$ such that an $(s,k)$-clash-free permutation $\pi$ of $\mathbb{Z}_n$ exists. What can be said about $\sigma(n,k)$? This question was first considered by Mammoliti and Simpson~\cite{MammolitiSimpson}, who showed that $\sigma(n,k)\leq \lfloor (n-1)/k\rfloor$ and conjectured that
\begin{equation}
\label{eqn:MS_conjecture}
\sigma(n,k)\in\{\lfloor (n-1)/k\rfloor-1,\lfloor (n-1)/k\rfloor\}
\end{equation}
for all $n$ and $k$. This conjecture was proved by Blackburn~\cite{Blackburn}, who gave an explicit construction of an $(s,k)$-clash-free permutation with $s= \lfloor (n-1)/k\rfloor-1$ for all $n$ and $k$.

As pointed out in~\cite{MammolitiSimpson}, we can think of the problem of constructing $(s,k)$-clash-free permutations as a problem of packing rectangles on a torus as follows. Consider an $n\times n$ array consisting of $n^2$ cells, each cell indexed by a pair of elements in $\mathbb{Z}_n$. We index the cells in cartesian fashion, so for example cell $(0,0)$ is at the lower left corner of the array, and cell $(n-1,0)$ is at the lower right corner of the array. Given a permutation $\pi$ of $\mathbb{Z}_m$, we add $n$ dots to the array in cells $(i,\pi(i))$ for $i\in\mathbb{Z}_n$. (This is sometimes known as the \emph{graph} of the permutation.) We add $s\times k$ rectangles (so rectangles that are $s$ cells wide and $k$ cells high) to the array, by shading cells $(i+x,\pi(i)+y)$ for $0\leq x<s$ and $0\leq y<k$. Then $\pi$ is $(s,k)$-clash-free if and only if these rectangles do not overlap. See Figure~\ref{fig:perm_example} for an example. So we may rephrase the problem of constructing an $(s,k)$-clash-free permutation as finding a (non-overlapping) packing of $n$ rectangles of size $s\times k$ in an $n\times n$ discrete torus, such that no two rectangles share the same lower side or the same left hand side. 

\begin{figure}
\begin{center}
\begin{tikzpicture}[fill=gray!50, scale=0.3,
vertex/.style={circle,inner sep=2,fill=black,draw}]

\clip (0,0) rectangle (40,40);

\draw[dotted] (0,0) grid (40,40);

\draw (0,0) rectangle (40,40);

\permrectangle{0}{0};
\permrectangle{1}{6};
\permrectangle{2}{27};
\permrectangle{3}{33};
\permrectangle{4}{12};
\permrectangle{5}{18};
\permrectangle{6}{39};
\permrectangle{7}{5};
\permrectangle{8}{24};
\permrectangle{9}{30};
\permrectangle{10}{11};
\permrectangle{11}{17};
\permrectangle{12}{36};
\permrectangle{13}{2};
\permrectangle{14}{23};
\permrectangle{15}{29};
\permrectangle{16}{8};
\permrectangle{17}{14};
\permrectangle{18}{35};
\permrectangle{19}{1};
\permrectangle{20}{20};
\permrectangle{21}{26};
\permrectangle{22}{7};
\permrectangle{23}{13};
\permrectangle{24}{32};
\permrectangle{25}{38};
\permrectangle{26}{19};
\permrectangle{27}{25};
\permrectangle{28}{4};
\permrectangle{29}{10};
\permrectangle{30}{31};
\permrectangle{31}{37};
\permrectangle{32}{16};
\permrectangle{33}{22};
\permrectangle{34}{3};
\permrectangle{35}{9};
\permrectangle{36}{28};
\permrectangle{37}{34};
\permrectangle{38}{15};
\permrectangle{39}{21};
\end{tikzpicture}
\end{center}
\caption{When $n=40$, $k=s=6$ and $\pi:\mathbb{Z}_{40}\rightarrow\mathbb{Z}_{40}$ is the permutation mapping $0,1,2,3,4,\ldots$ to $0,6,27,33,12,\ldots$ respectively.}
\label{fig:perm_example}
\end{figure}
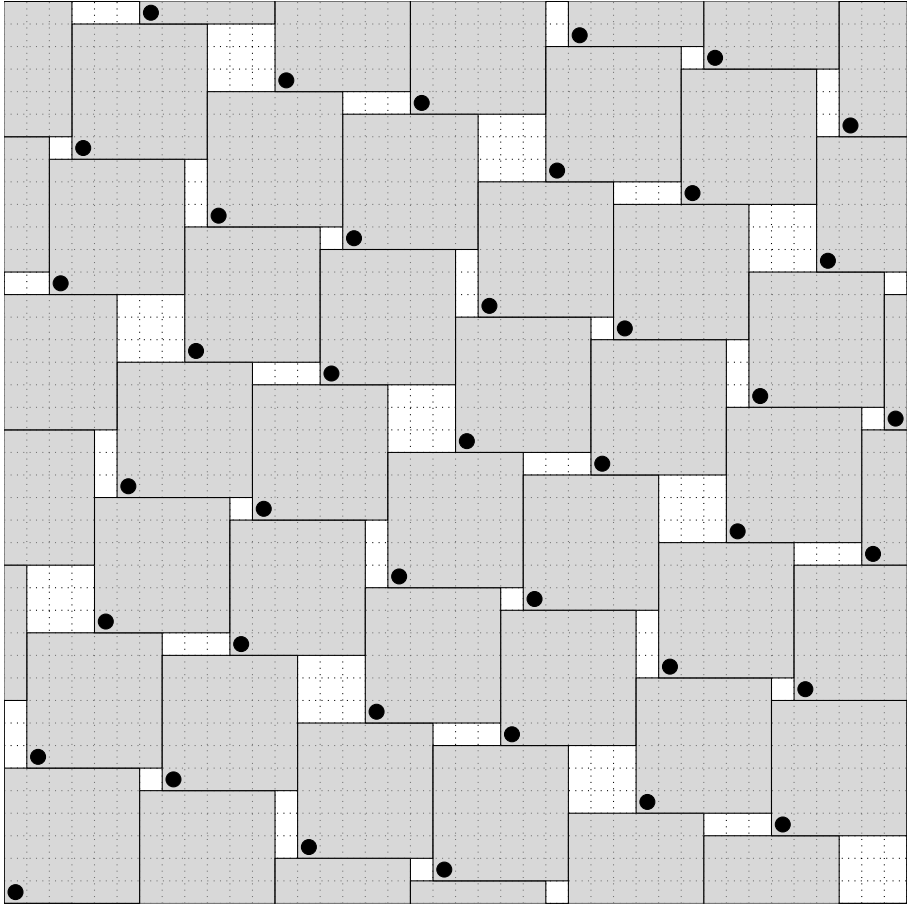

The case $k=2$ of this problem has been considered~\cite{Alspach,BrualdiKiernan,KreherPastine} under the name of cyclic matching sequencibility of graphs. Non-cyclic versions of the problem (in which we work over the integers $\{0,1,\ldots,n-1\}$ rather than modulo $n$) have also been studied~\cite{MammolitiSimpson}. (The special case of the non-cyclic problem when $k=s=2$ can be rephrased as the problem of placing $n$ non-attacking kings on an $n\times n$ chessboard, one in each row and each column. This is the problem of Hertzsprung~\cite{Hertzsprung} from 1887, rediscovered by Kaplansky~\cite{Kaplansky} in 1944.) Packings of diamonds rather than rectangles (called permuted packings) have applications to permutation patterns~\cite{BevanHomberger,BlackburnHomberger}; here we are looking for permutations whose dots have large minimum distance in the Manhattan metric (the $\ell_1$ metric).

Given~\eqref{eqn:MS_conjecture}, a very natural question to ask is: When does $\sigma(n,k)=\lfloor (n-1)/k\rfloor$? Mammoliti and Simpson found many parameters where this is the case. For example, they showed~\cite[Theorem~3.7]{MammolitiSimpson} that, setting $s=\lfloor (n-1)/k\rfloor$, we have $\sigma(n,k)=s$ when
\begin{equation}
\label{eqn:MS conditions}
k|n,\, s|n,\, \gcd(n,k)=1,\text{ or }\gcd(n,s)=1.
\end{equation}
Indeed, using this result and the fact that an $(s,k)$-clash-free permutation is $(s',k')$-clash-free when $s'\leq s$ and $k'\leq k$, they were able to show that $\sigma(n,k)=\lfloor (n-1)/k\rfloor$ for all choices of $n$ and $k$ with $n\leq 30$, apart from $(n,k)\in\{(18,4),(26,4),(26,6)\}$. In these last three cases, they showed by computer search that $\sigma(n,k)=\lfloor (n-1)/k\rfloor-1$. As pointed out in~\cite{Blackburn}, the first open cases with $n>30$ after applying the techniques in~\cite{MammolitiSimpson} are $(n,k)\in\{(34,4),(34,8),(38,6),(39,6),(40,6)\}$.

The main theorem in this paper completely resolves the question of the value of $\sigma(n,k)$, for all values of $n$ and $k$. Indeed, we prove the following theorem:

\begin{theorem}
\label{thm:main}
Let $n$ and $k$ be fixed positive integers, with $k<n$. Write $s=\lfloor (n-1)/k\rfloor$, so $n=sk+r$ where $1\leq r\leq k$. Define $d_k=\gcd(n,k)$ and $d_s=\gcd(n,s)$.
\begin{itemize}
\item If $r\geq s$ or $k=r$, then $\sigma(n,k)=\lfloor (n-1)/k\rfloor$.
\item If $r<s$ and $r<k$ and $d_sd_k$ divides $n$, then $\sigma(n,k)=\lfloor (n-1)/k\rfloor$.
\item If $r<s$ and $r<k$ and $d_sd_k$ does not divide $n$, then $\sigma(n,k)=\lfloor (n-1)/k\rfloor-1$.
\end{itemize}
\end{theorem}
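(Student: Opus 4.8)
By \cite{MammolitiSimpson} and \cite{Blackburn} we already know that $\sigma(n,k)\in\{s-1,s\}$ with $s=\lfloor(n-1)/k\rfloor$, so the whole theorem reduces to deciding, in each regime, whether a \emph{perfect} clash-free permutation (one attaining $s$) exists. I would work throughout in the rectangle-packing picture: such a $\pi$ is the same thing as a non-overlapping packing of $n$ rectangles of size $s\times k$ with lower-left corners $(i,\pi(i))$, equivalently the requirement that the image under $\pi$ of every window $\{i,i+1,\dots,i+s-1\}$ is a $k$-separated subset of $\mathbb{Z}_n$. I would also exploit the transposition symmetry: transposing the torus shows that an $s\times k$ packing exists if and only if a $k\times s$ one does, which matches the symmetry of the hypotheses under $s\leftrightarrow k$ and $d_s\leftrightarrow d_k$. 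First I would record the elementary profile facts: reading any single column one sees $s$ pairwise-disjoint height-$k$ arcs and exactly $r$ empty cells, and dually each row shows $k$ disjoint width-$s$ arcs and $r$ empty cells. Crucially, each maximal run of covered cells in a column has length a multiple of $k$, and in a row a multiple of $s$. I would use the clean identities $d_k=\gcd(n,k)=\gcd(r,k)$ and $d_s=\gcd(n,s)=\gcd(r,s)$ (so $d_s,d_k\mid r$), which follow from $n=sk+r$.

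For the two regimes where the answer is $s$ I would exhibit explicit permutations. The basic device is the affine map $\pi(i)=ci$: if $\gcd(c,n)=1$ and $||ct,0||_n\ge k$ for $1\le t\le s-1$ then $\pi$ is $(s,k)$-clash-free, and $c=k$ already works whenever $\gcd(k,n)=1$. When $r\ge s$ or $r=k$ there is genuine slack: either the gaps in each column can absorb an extra unit, so that a multiplier near $k$ and coprime to $n$ with the required separation can be found, or $r=k$ forces $n=(s+1)k$ and a direct block construction applies; this should dispatch the first case. For the second case the hypothesis $d_sd_k\mid n$ is precisely what is needed to glue one-dimensional patterns in the two directions simultaneously: I would build $\pi$ from compatible horizontal and vertical patterns by a Chinese-Remainder/product construction along the subgroups of $\mathbb{Z}_n$ of orders $n/d_s$ and $n/d_k$, refining the construction of \cite{Blackburn} so that the extra row and the extra column each direction demands can be accommodated at once.

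The heart of the theorem is the non-existence statement of the third case, and this is where I expect the real difficulty. Assume a perfect packing exists. The covered cells decompose canonically into $sk$ permutation matrices — the graphs of the maps $x\mapsto\pi(x-p)+q$ for $0\le p<s$ and $0\le q<k$ — while the $r$ empty cells, being $r$-regular as a bipartite row/column incidence, fill out $r$ further permutation matrices; this exhibits the whole array as a Latin-square-type structure keyed to the ``type'' $(p,q)$. The naive plan would be to count covered cells in each residue class $(x,y)\bmod(d_s,d_k)$, but because $d_s\mid s$ and $d_k\mid k$ every rectangle is perfectly balanced across these classes, so all such counts come out equal and are automatically consistent; the same cancellation occurs for the finer per-column and per-row residue counts, and it forces the necessary condition $d_sd_k\mid nr$, which already holds. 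Hence \emph{no coarse counting can see the obstruction}, and one is driven to track positional (frame) information rather than mere cardinalities.

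The obstruction I would therefore pursue is a monodromy. Since the covered runs in a column have lengths divisible by $k$ while $k\nmid n$, consecutive empty cells of a column differ by $\equiv 1\pmod k$, so the natural ``mod $k$'' labelling of a column's empty cells is defined only up to a global shift by $r$ on going once around the cycle; dually the ``mod $s$'' labelling of a row's empty cells shifts by $r$. This shift has order $k/d_k$ in one direction and $s/d_s$ in the other, and the aim is to show that making the two windings simultaneously consistent around the doubly-periodic $n\times n$ torus is possible exactly when $d_sd_k\mid n$, producing a contradiction otherwise. Converting this heuristic into a rigorous invariant is the step I expect to be the main obstacle: the most promising candidates are a character sum $\sum_i\omega^{ui+v\pi(i)}$ (for a primitive $n$-th root of unity $\omega$) evaluated at the frequencies that detect $d_s$ and $d_k$, or an explicit winding number attached to the empty-cell permutation and its transpose. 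It is precisely this finer positional analysis that should separate the sharp value $s$ from the weaker guarantee $\sigma\ge s-1$.
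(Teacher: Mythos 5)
Your proposal sets up the right framework (the rectangle-packing picture, the reduction via $\sigma(n,k)\in\{s-1,s\}$, the row/column free-cell counts, and the symmetry under $s\leftrightarrow k$), but it does not prove either of the two substantive assertions: both are left as plans, and you say so yourself. For the non-existence direction (third bullet), your ``monodromy'' heuristic is never converted into an actual invariant, and you explicitly flag that conversion as ``the main obstacle.'' The paper closes exactly this gap with a combinatorial, not analytic, argument: since $r<s$ and $r<k$, each rectangle touches exactly one rectangle on each of its four sides (this uses the count of $r$ free cells per row and column); the touching relations organise the rectangles into \emph{warp threads} (cycles of $n/d_s$ rectangles, one for each residue class of $i$ modulo $d_s$) and \emph{weft threads} (cycles of $n/d_k$ rectangles, one for each residue class of $\pi(i)$ modulo $d_k$); the map $\tau$ sending a warp thread to the warp thread touching it from above is a $d_s$-cycle; and travelling once around a weft thread (which has $n/d_k$ rectangles) visits the warp threads with exact period $d_s$, forcing $d_s\mid n/d_k$, i.e.\ $d_sd_k\mid n$. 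Nothing in your sketch supplies a substitute for this: a character sum $\sum_i\omega^{ui+v\pi(i)}$ is still a (weighted) counting invariant, and you give no reason it would detect the obstruction when, as you yourself correctly observe, residue-class counts cannot.

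For the existence direction (second bullet) you likewise only promise a ``Chinese-Remainder/product construction'' without defining a permutation or verifying clash-freeness, and that verification is not routine. The paper's construction is explicit: writing $x=(\alpha d_k+i)s+j \bmod n$ with $0\le i<d_k$, $0\le j<d_s$, $0\le\alpha<n/(d_sd_k)$, it sets $\pi(x)=jk-i-\alpha d_sd_k$, proves injectivity by reducing first modulo $d_k$ and then modulo $d_s$, and proves clash-freeness by exhibiting the dots of $\pi$ inside a lattice-periodic packing of the plane whose lattice contains $(n,0)$ and $(0,n)$, so that the packing descends to the torus. Even your first bullet is only gestured at (``a multiplier near $k$ \dots\ can be found''); the paper instead derives it from known results: $k=r$ gives $k\mid n$ and $r=s$ gives $s\mid n$ (both covered by Mammoliti--Simpson), while $r>s$ allows Blackburn's construction with the roles of $s$ and $k$ exchanged. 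In short, your write-up is a credible research programme whose two hard steps coincide with the paper's Theorems \ref{thm:gap} and \ref{thm:existence}, but it contains a proof of neither.
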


Given~\eqref{eqn:MS_conjecture}, this theorem is equivalent to the assertion that an $(s,k)$-clash-free permutation with $s=\lfloor (n-1)/k\rfloor$ exists if and only if $s\leq r$, $k=r$, or $d_sd_k$ divides $n$.

Considering the small open cases listed above, Theorem~\ref{thm:main} shows in particular (see Figure~\ref{fig:perm_example}) that $\sigma(n,k)=\lfloor (n-1)/k\rfloor$ when $(n,k)=(40,6)$, and $\sigma(n,k)=\lfloor (n-1)/k\rfloor-1$ when $(n,k)\in\{(34,4),(34,8),(38,6),(39,6)\}$.

We observe that the first statement of the theorem follows from known results. To show this, first observe that if $k=r$ then $k$ divides $n$, and so $\sigma(n,k)=\lfloor (n-1)/k\rfloor$ by~\cite[Theorem~3.7]{MammolitiSimpson} and the theorem holds in this case. So we may assume without loss of generality that $1\leq r<k$. Secondly, we observe that if $r>s$ then
\[
\lfloor(n-1)/s\rfloor=\lfloor(ks+(r-1))/s\rfloor\leq k+1
\]
and so $\lfloor (n-1)/s\rfloor-1\leq k$. In this case the construction from Blackburn~\cite[Section~2]{Blackburn} with the roles of $r$ and $s$ swapped gives a $(k,s)$-clash-free permutation of $\mathbb{Z}_n$. Since the existence of an $(s,k)$-clash-free permutation is equivalent to the existence of a $(k,s)$-clash-free permutation,  we find that the theorem holds in this case also. Hence it suffices to assume that $r\leq s$. Finally, if $r=s$ then $s$ divides $r$, and the theorem follows by~\cite[Theorem~3.7]{MammolitiSimpson}. Hence the first assertion of the theorem holds.

So it suffices to prove the final two assertions of the theorem. To this end, we will assume from now on that $n=sk+r$ where $r$ is positive, $r<s$ and $r<k$. 

In Section~\ref{sec:structure}, we will provide some structural information on permutations $\pi$ of $\mathbb{Z}_n$ that are $(\lfloor (n-1)/k,k)$-clash-free. In particular we show (see Theorem~\ref{thm:gap} below) that if an $(s,k)$-clash-free permutation exists then $d_sd_k$ must divide $n$. So if $d_sd_k$ does not divide $n$ then no $(k,s)$-clash-free permutation exists and hence the third statement of the Theorem~\ref{thm:main} will follow. 

In Section~\ref{sec:construction} we assume that $d_sd_k$ divides $n$ and provide an explicit construction for an $(s,k)$-clash-free permutation; see Theorem~\ref{thm:existence} below. This will prove the second statement of the theorem, and completes the proof of Theorem~\ref{thm:main}.

In fact, the techniques of Sections~\ref{sec:structure} and~\ref{sec:construction} can be extended to provide a comprehensive classification of $(s,k)$-clash-free permutations with $s=\lfloor (n-1)/k\rfloor$ when $r<k$, $s<k$ and $d_sd_k$ divides $n$. Section~\ref{sec:morestructure} sketches a proof of this classification.

\paragraph{Acknowledgement} We would like to thank Sergey Kitaev, for bringing the references on non-attacking kings to our attention.

\section{The structure of extremal clash-free permutations}
\label{sec:structure}

Let $n$ and $k$ be such that $k<n$. Define $s=\lfloor (n-1)/k\rfloor$ and write $n=sk+r$ with $1\leq r$. Assume that $r<s$ and $r<k$.
Let $\pi$ be an $(s,k)$-clash-free permutation. This section explores the structure of $\pi$. We will argue from the point-of-view of a rectangle packing for the sake of clarity. So we represent the torus $\mathbb{Z}_n^2$ as an $n\times n$ grid of $1\times 1$ cells, indexed in the usual cartesian fashion with the origin at the lower left-hand corner. We place dots in the cells with coordinates $(i,\pi(i))$. We place $n$ rectangles in the torus, all $s$ cells wide and $k$ cells high, with the $i$th rectangle $R_i$ having cell $(i,\pi(i))$ at its lower left corner. The fact that $\pi$ is $(s,k)$-clash-free implies that these rectangles do not overlap.

\begin{lemma}
\label{lem:rowcolumnfree}
Every column of the grid contains exactly $r$ cells that are not covered by a rectangle. Every row contains exactly $r$ cells that are not covered by a rectangle.
\end{lemma}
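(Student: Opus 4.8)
The plan is to prove both statements by a direct double-counting argument applied to a single column (respectively row), using only the disjointness of the rectangles that follows from the clash-free hypothesis. I would start with the columns. Fix a column $c\in\mathbb{Z}_n$, i.e.\ the set of $n$ cells whose first coordinate equals $c$. Since $R_i$ is $s$ cells wide with lower-left corner at $x$-coordinate $i$, it occupies exactly the columns $i,i+1,\dots,i+s-1$ (all indices mod $n$), so $R_i$ meets column $c$ precisely when $i\in\{c-s+1,\dots,c\}$. Because $sk<n$ forces $s<n$, this is a set of exactly $s$ distinct indices; as each index $i\in\mathbb{Z}_n$ labels exactly one rectangle, exactly $s$ rectangles meet column $c$.

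Next I would count how many cells of column $c$ each of these rectangles covers. A rectangle of height $k<n$ meets any single column in exactly $k$ cells, with no vertical self-overlap since $k<n$. The clash-free hypothesis says the $n$ rectangles are pairwise disjoint, so in particular the $s$ rectangles meeting column $c$ contribute $sk$ distinct covered cells among the $n$ cells of that column. Hence the number of uncovered cells in column $c$ is $n-sk=r$, which is the first statement. The row statement follows by the symmetric argument: for a fixed row $\rho$, the rectangle $R_i$ (of height $k$, lower-left $y$-coordinate $\pi(i)$) meets row $\rho$ exactly when $\pi(i)\in\{\rho-k+1,\dots,\rho\}$. Since $\pi$ is a permutation and $k<n$, there are exactly $k$ such indices $i$, hence exactly $k$ rectangles meet row $\rho$; each contributes $s$ cells (as the width $s<n$), and disjointness again yields $sk$ covered cells, so $n-sk=r$ cells of row $\rho$ are uncovered.

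The argument is short, and the points that require care — rather than being genuine obstacles — are verifying that exactly $s$ (respectively $k$) rectangles meet a given line, and that their contributions to that line are disjoint. Disjointness is immediate from the clash-free property. The two counts rely on slightly different facts: for columns, that the lower-left corners have distinct $x$-coordinates, which is automatic because they are indexed by $i\in\mathbb{Z}_n$; for rows, that the lower-left corners have distinct $y$-coordinates, which holds precisely because $\pi$ is a permutation. This asymmetry — geometry for columns, bijectivity of $\pi$ for rows — is the only subtlety, and once it is recorded the area count $n-sk=r$ is forced. As a sanity check, both statements give $nr$ uncovered cells in total, consistent with the global count $n^2-nsk=nr$.
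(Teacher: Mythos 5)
Your proof is correct and follows essentially the same argument as the paper: count the $s$ (respectively $k$) rectangles meeting a fixed column (respectively row), use disjointness to conclude that exactly $sk$ cells are covered, and deduce that $n-sk=r$ cells are free. The extra care you take over distinctness of indices and the role of $\pi$ being a permutation is implicit in the paper's version, so there is no substantive difference.
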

\begin{proof}
Let $x\in\mathbb{Z}_n$, and consider the column of all cells with first co-ordinate $x$. The $i$th rectangle $R_i$ intersects our column if and only if $i\in\{x,x-1,\ldots,x-(s-1)\}$. If the rectangle intersects our column, it covers exactly $k$ cells. Since the $s$ rectangles that intersect our column are disjoint, exactly $sk$ cells in our column are covered. So there are $n-sk=r$ cells in our column not covered by a rectangle. The argument for the row of cells with second co-ordinate $y\in\mathbb{Z}_n$ is essentially the same: there are $k$ rectangles intersecting our row (the rectangles $R_i$ where $\pi(i)\in\{y,y-1,\ldots,y-(k-1)\}$) and each rectangle covers $s$ cells in the row.
\end{proof}

A cell is called \emph{free} if it is not contained in a rectangle, and is otherwise \emph{covered}. So the lemma above states that every row and every column contains exactly $r$ free cells.

\begin{lemma}
\label{lem:touching}
Let $R$ be a fixed rectangle. Then exactly one rectangle touches the top edge of $R$. The same is true for the left, right and bottom edges of $R$.
\end{lemma}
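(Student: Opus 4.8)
The plan is to handle all four edges by a single mechanism: existence will come from Lemma~\ref{lem:rowcolumnfree}, and uniqueness from the fact that the rectangles are indexed bijectively (either by their image under $\pi$ or by their left-hand column). Fix $R=R_i$, which occupies columns $i,i+1,\dots,i+s-1$ and rows $\pi(i),\pi(i)+1,\dots,\pi(i)+k-1$. For the top edge, consider the strip $S$ of $s$ cells lying directly above $R$, namely the cells $(i+x,\pi(i)+k)$ for $0\le x<s$; these are distinct (since $s<n$) and all lie in the single row $\pi(i)+k$. The first step is an \emph{alignment} claim: any rectangle $R_j$ that covers a cell of $S$ must have its bottom edge exactly on the top edge of $R$, i.e.\ $\pi(j)=\pi(i)+k$. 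Indeed, such an $R_j$ shares a column with $R$, so $R_i$ and $R_j$ must be disjoint in their row ranges; this forces the bottom row of $R_j$ to sit exactly at $\pi(i)+k$. Uniqueness is then immediate, since $\pi$ is a permutation and hence there is exactly one index $j$ with $\pi(j)=\pi(i)+k$.

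For existence I would invoke Lemma~\ref{lem:rowcolumnfree}: the row $\pi(i)+k$ contains exactly $r$ free cells, and since $r<s$ the $s$ cells of $S$ cannot all be free. So some cell of $S$ is covered, necessarily by the rectangle $R_j$ with $\pi(j)=\pi(i)+k$, and that rectangle therefore touches the top edge of $R$. Combining the two steps gives exactly one rectangle touching the top edge. The bottom edge is handled in the same way using the strip in row $\pi(i)-1$; here the alignment claim pins the touching rectangle down to the unique index $j$ with $\pi(j)=\pi(i)-k$, and existence again follows because that row has only $r<s$ free cells.

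For the left and right edges I would transpose the argument, now using the hypothesis $r<k$. The strip immediately to the left of $R$ consists of the $k$ cells $(i-1,\pi(i)+y)$ for $0\le y<k$, all lying in the single column $i-1$. The alignment claim (two rectangles sharing a row must be disjoint in their column ranges) forces any touching rectangle to have its right-hand column at $i-1$, equivalently left-hand column $i-s$; this is a single index, so $R_{i-s}$ is the only candidate. Existence follows since column $i-1$ has only $r<k$ free cells but the strip has $k$, so one of them is covered. The right edge is symmetric, with touching rectangle $R_{i+s}$. The only point requiring genuine care is the alignment claim and its interaction with the cyclic (torus) geometry: one must check that a rectangle meeting the adjacent strip cannot overhang $R$ without overlapping it. This reduces to the elementary observation that two of our rectangles sharing a common column are disjoint in rows (and symmetrically for rows and columns interchanged), which fixes their relative position exactly; I expect this verification, rather than either counting step, to be the main thing to get right.
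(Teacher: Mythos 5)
Your proof is correct and follows essentially the same route as the paper: existence of a touching rectangle comes from Lemma~\ref{lem:rowcolumnfree} together with $r<s$ (resp.\ $r<k$), and uniqueness from the bijectivity of the indexing (the value $\pi(j)$ for the top/bottom edges, the left-hand column $j$ for the side edges). The only difference is presentational: you spell out the alignment claim (a rectangle meeting the adjacent strip cannot overhang $R$ without overlapping it), which the paper's proof uses implicitly.
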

\begin{proof}
Consider the $s$ cells that lie just above $R$. All these cells lie in the same row $y$, and this row contains $r$ free cells by Lemma~\ref{lem:rowcolumnfree}. Since $r<s$, not all of the $s$ cells are free. So at least one is occupied, and there is a rectangle touching the top edge of $R$. There cannot be two rectangles touching the top edge of $R$, since then these rectangles would both have dots with second coordinate $y$, contradicting the fact that $\pi$ is injective. So there is exactly one rectangle touching the top edge of $y$, and the first statement of the theorem follows. The same argument (applied three times) establishes the second statement of the theorem. 
\end{proof}

Let $\Gamma$ be the graph with vertex set $\mathbb{Z}_n$, with vertices $i$ and $j$ joined by an edge if and only if the rectangles $R_i$ and $R_j$ touch on their left or right sides. Let $\Delta$ be the graph with the same vertex set $\mathbb{Z}_n$, but with vertices $i$ and $j$ joined by an edge if and only if the rectangles $R_i$ and $R_j$ touch on their top or bottom sides. We call the connected components of $\Gamma$ \emph{warp threads}. We call the connected components of $\Delta$ \emph{weft threads}. (The terminology warp and weft comes from weaving. Parallel warp threads are set up in a loom, and the shuttle with a weft thread attached weaves through the warp threads, thus forming the transverse threads in the woven textile.)

\begin{lemma}
\label{lem:threadsize}
Each warp thread $U$ is a cycle containing $n/d_s$ vertices. The vertices in $U$ correspond to rectangles $R_i$ where $i$ lies in a fixed congruence class modulo $d_s$. Each weft thread $V$ is a cycle containing $n/d_k$ vertices. The vertices in $V$ correspond to rectangles $R_i$ where $\pi(i)$ lies in a fixed congruence class modulo $d_k$ 
\end{lemma}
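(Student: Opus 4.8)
The plan is to read off the cycle structure directly from Lemma~\ref{lem:touching}. That lemma tells us each rectangle has exactly one rectangle touching each of its four edges, so in the graph $\Gamma$ every vertex $i$ has exactly two neighbours, namely the rectangle touching the left edge of $R_i$ and the one touching its right edge. Hence $\Gamma$ is a disjoint union of cycles, and the same holds for $\Delta$; this already shows that each warp thread and each weft thread is a cycle, and it remains only to identify and count the vertices on each.

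First I would pin down the right-neighbour of $R_i$. Since $R_i$ occupies columns $i,i+1,\ldots,i+s-1$, its right edge lies at column $i+s$, and a rectangle touches this edge precisely when its own left edge coincides with it, i.e.\ when its lower-left corner has first coordinate $i+s$. Thus the right-neighbour of $R_i$ in $\Gamma$ is $R_{i+s}$, and likewise the left-neighbour is $R_{i-s}$. Because $n=sk+r$ with $k\geq 2$ we have $n>2s$, so $i-s\neq i+s$ and the two neighbours are genuinely distinct, confirming that $\Gamma$ is a union of cycles rather than something degenerate.

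Following right-neighbours repeatedly, the warp thread through $i$ is the orbit $i,i+s,i+2s,\ldots$ in $\mathbb{Z}_n$, which is the coset $i+\langle s\rangle$, where $\langle s\rangle$ is the cyclic subgroup generated by $s$. Since $\langle s\rangle$ consists exactly of the multiples of $\gcd(n,s)=d_s$, this coset is precisely the set of $j\in\mathbb{Z}_n$ with $j\equiv i\pmod{d_s}$, and it has $n/d_s$ elements; this gives both assertions about warp threads. The statement for weft threads follows by the symmetric argument in the vertical direction: tracking $\pi$-values, the rectangle touching the top edge of $R_i$ (which occupies rows $\pi(i),\ldots,\pi(i)+k-1$) has second coordinate $\pi(i)+k$, so moving up a weft thread increases the $\pi$-value by $k$. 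The weft thread through $i$ therefore consists of the $R_j$ whose $\pi$-values form the coset $\pi(i)+\langle k\rangle$, i.e.\ those with $\pi(j)\equiv\pi(i)\pmod{d_k}$, of which there are $n/d_k$.

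I do not expect a serious obstacle here, since the lemma is essentially a bookkeeping consequence of Lemma~\ref{lem:touching}. The only points needing care are the coordinate arithmetic---verifying that \emph{touching} on the right forces the neighbour's index to be exactly $i+s$ rather than merely overlapping vertically---and confirming via $n>2s$ that the two horizontal (respectively vertical) neighbours are distinct, so that the components really are cycles of the claimed length.
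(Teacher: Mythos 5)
Your proposal is correct and follows essentially the same route as the paper's proof: use Lemma~\ref{lem:touching} to get that every vertex of $\Gamma$ (resp.\ $\Delta$) has degree two, observe that the first coordinate steps by $s$ (resp.\ the second by $k$) along a thread, and conclude that each thread is the coset $i+\langle s\rangle$ (resp.\ $\pi(i)+\langle k\rangle$), i.e.\ a congruence class modulo $d_s$ (resp.\ $d_k$) of size $n/d_s$ (resp.\ $n/d_k$). Your extra check that $n>2s$ (so the left and right neighbours are distinct) is a small point of care the paper glosses over, but it is the same argument.
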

\begin{proof}
Lemma~\ref{lem:touching} implies that warp threads are all cycles, since every vertex of the thread has degree $2$. The first co-ordinates of rectangles in a warp thread $U$ increases by $s$ as we move right along the cycle. So we return to the start of the cycle after $\ell$ steps, where $\ell$ is the additive order of $s$ modulo $n$. Since $\ell=n/d_s$, the first statement of the theorem follows. The second statement follows from this observation, together with the fact that the number of elements $i\in\mathbb{Z}_n$ in a congruence class modulo $d_s$ is also $n/d_s$. The final statement of the lemma is proved similarly, using the fact that two touching rectangles in a weft thread $V$ have second co-ordinates that differ by $k$. 
\end{proof}

Let $R$ be a rectangle in our packing. There are four rectangles touching $R$, none of which is completely aligned horizontally or vertically with $R$ (because $\pi$ is a permutation). As can be seen in Figure~\ref{fig:clockwise} (where the arrows span a positive number of cells), the fact that rectangles do not overlap shows that the rectangles touching $R$ can be arranged in one of two ways: we call $R$ a \emph{clockwise rectangle} or~\emph{anticlockwise rectangle} respectively. This figure also shows that touching rectangles are clockwise when $R$ is clockwise, and anticlockwise otherwise. So all the rectangles in our packing are either clockwise or anticlockwise: we call $\pi$ a clockwise or anticlockwise permutation depending on which case occurs. Reflecting in a horizontal line (or replacing $\pi$ by $-\pi$) shows that a clockwise $(s,k)$-clash-free permutation exists if and only if an anticlockwise $(s,k)$-clash-free permutation exists.
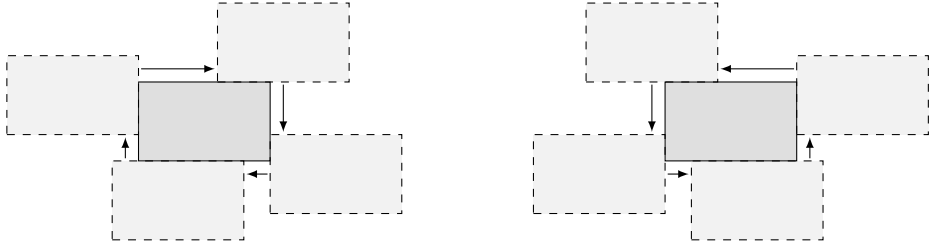
\begin{figure}
\begin{center}
\begin{tikzpicture}[fill=gray!50, scale=0.35]
\filldraw[fill opacity=0.5] (0,0)--(5,0)--(5,3)--(0,3)--cycle;
\filldraw[fill opacity=0.2,dashed] (3,3)--(8,3)--(8,6)--(3,6)--cycle;
\filldraw[fill opacity=0.2,dashed] (-5,1)--(0,1)--(0,4)--(-5,4)--cycle;
\filldraw[fill opacity=0.2,dashed] (5,-2)--(10,-2)--(10,1)--(5,1)--cycle;
\filldraw[fill opacity=0.2,dashed] (-1,-3)--(4,-3)--(4,0)--(-1,0)--cycle;
\draw [-latex] (0.1,3.5) -- (2.9,3.5);
\draw [latex-] (5.5,1.1) -- (5.5,2.9);
\draw [latex-] (4.1,-0.5) -- (4.9,-0.5);
\draw [latex-] (-0.5,0.9) -- (-0.5,0.1);
\filldraw[fill opacity=0.5] (25-0,0)--(25-5,0)--(25-5,3)--(25-0,3)--cycle;
\filldraw[fill opacity=0.2,dashed] (25-3,3)--(25-8,3)--(25-8,6)--(25-3,6)--cycle;
\filldraw[fill opacity=0.2,dashed] (25+5,1)--(25-0,1)--(25-0,4)--(25+5,4)--cycle;
\filldraw[fill opacity=0.2,dashed] (25-5,-2)--(25-10,-2)--(25-10,1)--(25-5,1)--cycle;
\filldraw[fill opacity=0.2,dashed] (25+1,-3)--(25-4,-3)--(25-4,0)--(25+1,0)--cycle;
\draw [-latex] (25-0.1,3.5) -- (25-2.9,3.5);
\draw [latex-] (25-5.5,1.1) -- (25-5.5,2.9);
\draw [latex-] (25-4.1,-0.5) -- (25-4.9,-0.5);
\draw [latex-] (25+0.5,0.9) -- (25+0.5,0.1);
\end{tikzpicture}
\end{center}
\caption{Clockwise (left) and anticlockwise (right) rectangles.}
\label{fig:clockwise}
\end{figure}

We define the \emph{gap} of a rectangle $R$ to be the (edge) connected region of free cells that include free cells just above $R$. 

\begin{lemma}
\label{lem:warpweft}
Fix a warp thread $U$. The set of rectangles touching the upper edges of rectangles in $U$ is another warp thread $U'$. The gap of any rectangle $R\in U$ is rectangular in shape. The width of the gap of $R\in U$ depends only on $U$ (not $R$). The set of rectangles touching the lower edges of rectangles in $U$ form another warp thread. Fix a weft thread $V$. The set of rectangles touching the left edges of rectangles in $V$ forms a weft thread, as does the set of rectangles touching the right edges of rectangles in $V$. The gap of $R\in V$ is a rectangle, and its height depends only on $V$ (not $R$).
\end{lemma}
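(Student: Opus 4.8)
The plan is to exploit the rigidity forced by Lemma~\ref{lem:rowcolumnfree} (every row and column has exactly $r$ free cells) together with the cyclic structure of the threads from Lemma~\ref{lem:threadsize}. By the reflection symmetry noted before the lemma I may assume $\pi$ is clockwise, and it suffices to prove the four warp-thread assertions (that the upper-touching and the lower-touching rectangles of $U$ each form a warp thread, that each gap is a rectangle, and that the gap width is constant along $U$); the weft-thread assertions will then follow by running each argument below in the vertical direction, equivalently by a dihedral symmetry of the torus that interchanges $\Gamma$ with $\Delta$ and width with height. Throughout, for $R_i\in U$ with corner $(i,\pi(i))$ let $T_i$ be the unique rectangle touching the top edge of $R_i$ (Lemma~\ref{lem:touching}); since $\pi$ is clockwise, $T_i$ is shifted right, so its corner is $(i+a_i,\pi(i)+k)$ with $0<a_i<s$. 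By Lemma~\ref{lem:touching} no rectangle other than $T_i$ meets row $\pi(i)+k$ within columns $i,\dots,i+s-1$, so the free cells immediately above $R_i$ are exactly the $a_i$ cells in columns $i,\dots,i+a_i-1$ of that row.

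The crux is that $a_i$ is constant along $U$. Let $R_{i+s}$ be the right neighbour of $R_i$ in $U$; being clockwise, it is shifted down by some $c_i$ with $0<c_i<k$, so its corner is $(i+s,\pi(i)-c_i)$. Compare $T_i$ and $T_{i+s}$: their vertical extents $[\pi(i)+k,\pi(i)+2k)$ and $[\pi(i)-c_i+k,\pi(i)-c_i+2k)$ overlap in $k-c_i>0$ rows because $c_i<k$, and the left edge of $T_{i+s}$ (at $i+s+a_{i+s}$) lies strictly right of the left edge of $T_i$ (at $i+a_i$), since $s>a_i$. As the rectangles are disjoint and $T_i,T_{i+s}$ overlap vertically, $T_i$ must lie entirely to the left of $T_{i+s}$, i.e.\ $i+a_i+s\le i+s+a_{i+s}$, forcing $a_{i+s}\ge a_i$. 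Because $U$ is a cycle, chaining $a_i\le a_{i+s}\le a_{i+2s}\le\cdots\le a_i$ around $U$ returns to the start, so all these are equal; hence $a_i=a_U$ is constant on $U$, giving the width assertion.

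With $a_i=a_U$ constant, consecutive top rectangles $T_i$ and $T_{i+s}$ have corners $(i+a_U,\pi(i)+k)$ and $(i+s+a_U,\pi(i+s)+k)$: their first coordinates differ by exactly $s$ and they overlap vertically, so they touch along the line $x=i+s+a_U$ and are adjacent in $\Gamma$. Thus $\{T_i:R_i\in U\}$ is connected in $\Gamma$ and lies in a single warp thread $U'$; since $R_i\mapsto T_i$ is injective (each rectangle touches exactly one rectangle below it) and $|U|=n/d_s=|U'|$, this set is all of $U'$, proving the first assertion, and the statement for lower edges is identical. The vertical counterpart of the monotonicity argument—comparing the right neighbours of the weft-consecutive rectangles $R_i$ and $T_i$, which overlap horizontally and are vertically ordered—yields $c_{T_i}\le c_i$ and hence, by cyclicity of the weft thread, that the downward shift $c$ is constant along each weft thread; the same alignment argument then shows the left- and right-touchers of a weft thread form weft threads.

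It remains to show each gap is exactly a rectangle, which I expect to be the main obstacle. The four candidate sides of the gap of $R_i$ are already visible: below, the top edge of $R_i$; on the right, the left edge of $T_i$; above, the bottom edge of the left neighbour $T_i^{\ell}$ of $T_i$ in $U'$ (which, as $a_U<s$, covers columns $i,\dots,i+a_U-1$ from some row $\pi(i)+k+c'$ upward); and on the left, the right edge of a rectangle covering column $i-1$. The work is to verify that these seal off precisely the axis-parallel box $\{i,\dots,i+a_U-1\}\times\{\pi(i)+k,\dots,\pi(i)+k+c'-1\}$, i.e.\ to rule out any rectangle protruding a corner into this window so that the region neither leaks upward before reaching $T_i^{\ell}$ nor sideways past columns $i-1$ or $i+a_U$; here one uses that the $s$ rectangles covering a fixed column are stacked with total free height exactly $r$ and that the neighbour relations are now rigid. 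Once the region is known to be this box, its height is the vertical shift $c'$ between $T_i^{\ell}$ and $T_i$ in $U'$, which by the weft-thread constancy of $c$ is a weft-thread invariant of $R_i$, giving the height assertion.
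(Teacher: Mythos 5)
Your argument for the thread-structure and constant-width claims is correct, and it takes a genuinely different route from the paper's. The paper proves directly that the top-touchers of two warp-adjacent rectangles must themselves touch: the number of columns separating them is $v-u<s$, so no rectangle fits between them, and if they failed to touch then $k$ cells of a single column would all be free, contradicting Lemma~\ref{lem:rowcolumnfree} because $r<k$; the touching forces $u=v$, and then constant width, thread closure, and the rectangular shape of the gaps are all read off from the resulting rigid configuration (Figure~\ref{fig:constantgap}). You instead derive the inequality $a_i\le a_{i+s}$ purely from disjointness plus the vertical overlap of $T_i$ and $T_{i+s}$, and then use cyclicity of the warp thread (Lemma~\ref{lem:threadsize}) to turn the chain of inequalities into equalities, with touching appearing as a consequence of equality rather than as its source. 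This trades the paper's free-cell counting for an ordering-plus-periodicity argument; both rest on the same earlier lemmas, and your closure step (injectivity of $R_i\mapsto T_i$ together with $|U|=|U'|=n/d_s$) is actually more explicit than the paper's.

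The one defect is your final paragraph: you correctly name the four rectangles that should bound the gap of $R_i$, but you leave the sealing-off ``work'' undone and flag it as the main obstacle. It is not an obstacle; it follows in a few lines from what you have already proved, and you should carry it out. Write $c'$ for the downward shift from $R_{i-s}$ to $R_i$, so that $T_i^{\ell}=T_{i-s}$ has lower left corner $(i-s+a_U,\pi(i)+k+c')$, and let $B=\{i,\dots,i+a_U-1\}\times\{\pi(i)+k,\dots,\pi(i)+k+c'-1\}$. Then $B$ is sealed: below by $R_i$; on the right by $T_i$ (its rows reach up to $\pi(i)+2k-1\ge \pi(i)+k+c'-1$ since $c'<k$); above by $T_{i-s}$, which covers every column of $B$ from row $\pi(i)+k+c'$ upward (since $a_U<s$); and on the left by $R_{i-s}$, which covers column $i-1$ in rows $\pi(i)+c',\dots,\pi(i)+c'+k-1\supseteq\{\pi(i)+k,\dots,\pi(i)+k+c'-1\}$ (again since $c'<k$). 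Moreover every cell of $B$ is free: a rectangle $Q$ covering a cell of $B$ must have bottom row at least $\pi(i)+k$ (else it meets $R_i$), hence top row at least $\pi(i)+2k-1\ge\pi(i)+k+c'$, so $Q$ covers a cell in row $\pi(i)+k+c'$ in one of the columns of $B$ and therefore overlaps $T_{i-s}$, a contradiction. Hence the edge-connected component of free cells containing the cells just above $R_i$ is exactly $B$, a rectangle of width $a_U$ and height $c'$; no protruding-corner analysis or free-cell counting is needed. With this inserted (and its transpose for the weft statements), your proof is complete.
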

\begin{proof}
We may assume that $\pi$ is a clockwise permutation. (The proof for an anticlockwise permutation is identical to the clockwise case.) We will prove the statements for warp threads: the corresponding proofs for weft threads are similar.

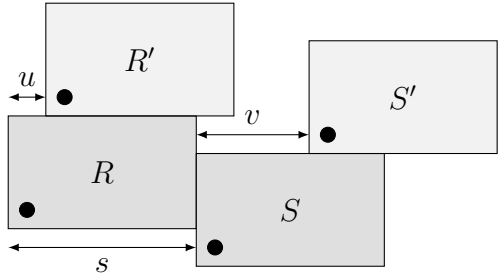
\begin{figure}
\begin{center}
\begin{tikzpicture}[fill=gray!50, scale=0.5,vertex/.style={circle,inner sep=2,fill=black,draw}]
\filldraw[fill opacity=0.5] (0,0) rectangle (5,3);
\draw (2.5,1.5) node {$R$};
\draw (0.5,0.5) node [vertex]{};
\filldraw[fill opacity=0.5] (5,-1) rectangle (10,2);
\draw (7.5,0.5) node {$S$};
\draw (5.5,-0.5) node [vertex]{};
\filldraw[fill opacity=0.2] (1,3) rectangle (6,6);
\draw (3.5,4.5) node {$R'$};
\draw (1.5,3.5) node [vertex]{};
\filldraw[fill opacity=0.2] (8,2)--(13,2)--(13,5)--(8,5)--cycle;
\draw (10.5,3.5) node {$S'$};
\draw (8.5,2.5) node [vertex]{};
\draw [latex-latex] (0,3.5) -- (1,3.5) node[midway,above] {$u$};
\draw [latex-latex] (5,2.5) -- (8,2.5) node[midway,above] {$v$};
\draw [latex-latex] (0,-0.5) -- (5,-0.5) node[midway,below] {$s$};
\end{tikzpicture}
\end{center}
\caption{Can $R'$ and $S'$ fail to touch?}
\label{fig:twowarp}
\end{figure}

Let $R$ and $S$ be adjacent rectangles in a warp thread $U$. Let $R'$ and $S'$ be the rectangles touching the upper sides of $R$ and $S$ respectively; see Figure~\ref{fig:twowarp}. To prove the first statement of the lemma, it suffices to prove that $R'$ and $S'$ lie on the same warp thread. The first coordinates $a$ of the dot in $R$ and $b$ of the dot in $S$ satisfy the equality $b-a=s$, since $R$ and $S$ touch. The first coordinate of the dot in $R'$ is of the form $a'=a+u$ where $1\leq u<s$, and the first coordinate of the dot in $S'$ is $b'=b+v$ where $1\leq v<s$. The number of columns separating $R'$ and $S'$ is $b'-a'-s$, where
\[
b'-a'-s=(b+v)-(a+u)-s=v-u<v<s.
\]
So there can be no rectangle between $R'$ and $S'$, as $R$ and $S$ are separated by fewer than $s$ columns. But this means that $R'$ and $S'$ touch, since otherwise the $k$ cells immediately to the left of $S$ are all free. This implies that $R'$ and $S'$ lie on the same warp thread, as required.

The same argument implies the fourth statement of the lemma, that the rectangles touching the lower sides of a warp thread form another warp thread.

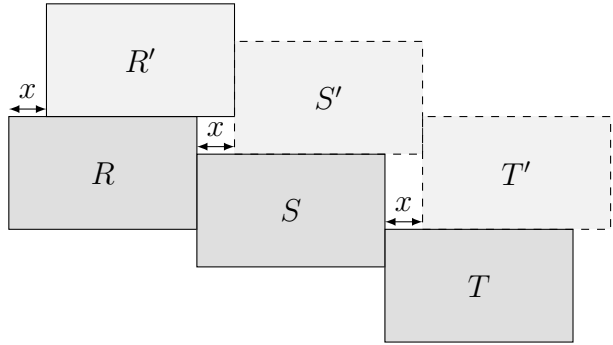
\begin{figure}
\begin{center}
\begin{tikzpicture}[fill=gray!50, scale=0.5,vertex/.style={circle,inner sep=2,fill=black,draw}]
\filldraw[fill opacity=0.5] (0,0) rectangle (5,3);
\draw (2.5,1.5) node {$R$};
\filldraw[fill opacity=0.5] (5,-1) rectangle (10,2);
\draw (7.5,0.5) node {$S$};
\filldraw[fill opacity=0.5] (10,-3) rectangle (15,0);
\draw (12.5,-1.5) node {$T$};
\filldraw[fill opacity=0.2] (1,3) rectangle (6,6);
\draw (3.5,4.5) node {$R'$};
\filldraw[fill opacity=0.2,dashed] (6,2)--(11,2)--(11,5)--(6,5)--cycle;
\draw (8.5,3.5) node {$S'$};
\filldraw[fill opacity=0.2,dashed] (11,0)--(16,0)--(16,3)--(11,3)--cycle;
\draw (13.5,1.5) node {$T'$};
\draw [latex-latex] (0,3.2) -- (1,3.2) node[midway,above] {$x$};
\draw [latex-latex] (5,2.2) -- (6,2.2) node[midway,above] {$x$};
\draw [latex-latex] (10,0.2) -- (11,0.2) node[midway,above] {$x$};
\end{tikzpicture}
\end{center}
\caption{Gaps above a warp thread have constant width $x$.}
\label{fig:constantgap}
\end{figure}
In Figure~\ref{fig:constantgap}, $R$, $S$ and $T$ lie in the same warp thread. Let $R'$, $S'$ and $T'$ be the rectangles touching the upper sides of $R$, $S$ and $T$ respectively. We note that once the position of the rectangles $R$, $S$, $T$ and $R'$ are known, the position of $S'$ is determined since $S'$ touches $S$ and $R'$, and similarly the position of $T'$ are determined since it touches $T$ and $S'$. The figure shows that the gaps of $S$ and $T$ both rectangular, of the same width. This suffices to establish the second and third statements of the lemma.
\end{proof}

We may use the results of Lemma~\ref{lem:warpweft} to define two maps on the set of warp threads, and two maps on the set of all weft threads, as follows. For a warp thread $U$, define $\tau(U)$ to be the warp thread consisting of all rectangles (not in $U$) that touch the upper edges of $U$. Define $\delta(U)$ to be the warp thread consisting of all rectangles (not in $U$) that touch the lower edges of $U$. For a weft thread $V$, define $\rho(V)$ to be the weft thread consisting of all rectangles (not in $V$) that touch the right edges of rectangles in $V$. Finally, define $\lambda(V)$ to be the weft thread consisting of all rectangles (not in $V$) that touch the left edges of rectangles in $V$.
\begin{lemma}
\label{lem:intersection}
The function $\tau$ defined above is a permutation of the set of all $d_s$ warp threads; indeed it is a $d_s$-cycle. The function $\rho$ defined above is a cyclic permutation of the set of weft threads; indeed it is an $d_k$-cycle. 
\end{lemma}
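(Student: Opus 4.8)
The plan is to first dispose of the claim that $\tau$ and $\rho$ are permutations, and then to reduce the two cycle statements to a single assertion: that the graph on the $n$ rectangles whose edges record touching (on any side) is connected. For the first point I would use Lemma~\ref{lem:touching}: each rectangle has a unique neighbour on each of its four sides, so the rectangle below the rectangle above $R$ is $R$ itself. This gives $\delta\circ\tau=\mathrm{id}$ and $\tau\circ\delta=\mathrm{id}$, so $\tau$ is a permutation of the $d_s$ warp threads with inverse $\delta$, and symmetrically $\rho$ is a permutation of the $d_k$ weft threads with inverse $\lambda$. For the reduction, observe that moving between rectangles touching on their left or right sides keeps us inside a single warp thread, while moving up or down sends a warp thread $U$ to $\tau(U)$ or $\delta(U)=\tau^{-1}(U)$. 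Hence the warp threads met by the connected component of a rectangle $R$ are exactly the $\langle\tau\rangle$-orbit of the warp thread containing $R$, and likewise for weft threads and $\rho$. Thus the touching graph is connected if and only if $\tau$ is a single cycle, if and only if $\rho$ is a single cycle; proving connectivity settles both statements at once.

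To prove connectivity I would show that the torus cannot split into two non-interacting fabrics. Assume $\pi$ is clockwise (the anticlockwise case follows by reflection), let $O$ be a connected component, and let $\mathcal R_O$ be the union of the rectangles lying in $O$ together with their gaps. Since every free cell lies directly above a unique nearest rectangle below it, and (by Lemma~\ref{lem:warpweft}) each gap is a rectangle sitting on the top of its rectangle, the gaps partition the free cells; consequently the regions $\mathcal R_O$, as $O$ runs over the components, partition the whole torus. The crux is the local claim that the gap above any $R\in O$ is bordered on all four sides only by rectangles of $O$: below by $R$ itself; on the right by the rectangle $R'$ touching the top of $R$; on the left by the left-hand warp-neighbour $L$ of $R$; and along the top by the rectangle $L'$ touching the top of $L$. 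Each of $R'$, $L$ and $L'$ is joined to $R$ by touching edges, hence lies in $O$. Granting this, $\mathcal R_O$ contains a neighbourhood of each of its points and so is open; the same holds for every component, so the complement of $\mathcal R_O$ is open as well. As the torus is connected, $\mathcal R_O$ must be the whole torus and there is a single component.

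The main obstacle is precisely this four-sided bordering claim, and this is where the clockwise local geometry and the quantitative content of Lemma~\ref{lem:warpweft} are needed. Writing the dot of $R$ at $(i,\pi(i))$, I would check that $R'$ is offset to the right by exactly the (constant) gap-width of $U$, that $L$ is offset upward by some $t$ with $1\le t\le k-1$, and that the height of the gap of $R$ equals this same $t$, so that $L'$ (offset upward from $L$ by $k$) lies flush along the entire top edge of the gap while $L$ seals its left edge. Verifying that these offsets match, so that $R'$, $L$ and $L'$ genuinely close off the gap with no free cell escaping into a neighbouring component, is the one step demanding real care; once it is in place the clopen argument closes immediately and yields that $\tau$ is a $d_s$-cycle and $\rho$ is a $d_k$-cycle.
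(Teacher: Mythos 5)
Your proposal is correct and takes essentially the same route as the paper: your region $\mathcal{R}_O$ (a touching-component together with its gaps) coincides with the paper's $\bigcup_{i\in\mathbb{Z}}\tau^i(U)$ together with its gaps, and your adjacency/clopen argument is exactly the paper's step that this region ``has no boundary and so is the whole of the torus.'' The differences are presentational only: you deduce both cycle statements at once from connectivity of the touching graph, and you spell out the four-sided sealing claim ($R$, $R'$, $L$, $L'$ enclosing each gap) that the paper leaves implicit.
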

\begin{proof}
The functions $\tau$ and $\rho$ are permutations, as they have inverses $\delta$ and $\lambda$ respectively. Let $U$ be a fixed warp thread. Then the region of the torus consisting of the rectangles in $\bigcup_{i\in\mathbb{Z}}\tau^i(U)$ together with their gaps has no boundary and so is the whole of the torus. Hence every warp thread lies in $\bigcup_{i\in\mathbb{Z}}\tau^i(U)$, and so $\tau$ is an $n/d_s$-cycle. Similarly, $\rho$ is an $n/d_k$-cycle.
\end{proof}

\begin{theorem}
\label{thm:gap}
Let $n$ and $k$ be fixed positive integers, with $k<n$. Write $s=\lfloor (n-1)/k\rfloor$, so $n=sk+r$ where $1\leq r\leq k$. Define $d_k=\gcd(n,k)$ and $d_s=\gcd(n,s)$. Suppose that $r<k$ and $r<s$. If an $(s,k)$-clash-free permutation of $\mathbb{Z}_n$ exists, then $d_sd_k$ divides $n$.
\end{theorem}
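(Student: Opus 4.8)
The plan is to recast the divisibility $d_sd_k\mid n$ as the integrality of a single count, and then to show that count is forced to be constant. For $a\in\mathbb{Z}_{d_s}$ and $b\in\mathbb{Z}_{d_k}$, I would set
\[
N_{a,b}=\left|\left\{\, i\in\mathbb{Z}_n : i\equiv a \ (\mathrm{mod}\ d_s),\ \pi(i)\equiv b \ (\mathrm{mod}\ d_k)\,\right\}\right|,
\]
so that, by the thread descriptions in Lemma~\ref{lem:threadsize}, $N_{a,b}$ counts the rectangles $R_i$ lying simultaneously in the warp thread indexed by $a$ and the weft thread indexed by $b$. Summing over $b$ gives $\sum_b N_{a,b}=n/d_s$ (the size of a warp thread), and summing over $a$ gives $\sum_a N_{a,b}=n/d_k$ (the size of a weft thread). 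The goal is to prove that every $N_{a,b}$ equals $n/(d_sd_k)$, since then this quantity is a non-negative integer and $d_sd_k\mid n$ follows.

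The crucial step is to show that the map $\tau$ of Lemma~\ref{lem:intersection} preserves weft class. Let $R=R_i$ and let $R'=R_j$ be the unique rectangle touching the top edge of $R$ (unique by Lemma~\ref{lem:touching}). Since $R$ has height $k$ with its dot at height $\pi(i)$, the top edge of $R$ is at height $\pi(i)+k$; as $R'$ rests on this edge, its dot lies at that height, so $\pi(j)=\pi(i)+k$. Because $d_k=\gcd(n,k)$ divides both $n$ and $k$, reduction modulo $d_k$ is well defined on $\mathbb{Z}_n$ and carries $\pi(i)+k$ to $\pi(i)$; hence $R$ and $R'$ share a weft thread. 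I would then note that for a fixed warp thread $U$ the assignment $R\mapsto R'$ is a bijection from $U$ onto $\tau(U)$ (its inverse sends a rectangle to the rectangle touching its bottom edge, again using Lemma~\ref{lem:touching}), and this bijection preserves weft class. Consequently $N_{a,b}=N_{a',b}$ for all $b$, where $U$ and $\tau(U)$ are indexed by $a$ and $a'$. Since $\tau$ is a single $d_s$-cycle on the warp threads (Lemma~\ref{lem:intersection}), all warp threads lie in one $\tau$-orbit, so $N_{a,b}$ does not depend on $a$; writing it as $N_b$, the column sum gives $n/d_k=\sum_a N_{a,b}=d_sN_b$, whence $N_b=n/(d_sd_k)$ and the theorem follows.

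I expect the only real content to be the weft-invariance of $\tau$: the clean ``$+k$'' shift between a rectangle and the one above it, combined with $d_k\mid k$, is what makes the weft class constant along the $\tau$-action. The point to handle with care is the promotion of this observation from individual rectangles to threads, namely that $R\mapsto R'$ is genuinely a weft-class-preserving bijection between $U$ and $\tau(U)$, so that the counts $N_{a,b}$ may legitimately be equated thread-by-thread. Once this is secured, the transitivity of $\tau$ supplied by Lemma~\ref{lem:intersection} completes the argument, and the whole proof avoids any analysis of gap widths or heights.
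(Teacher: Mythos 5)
Your proof is correct and is essentially the paper's own argument in different packaging: both rest on the two facts that passing to the rectangle touching the top edge stays within a weft thread while advancing the warp thread by $\tau$, and that $\tau$ is a single $d_s$-cycle on the warp threads (Lemma~\ref{lem:intersection}). The paper extracts the divisibility by observing that the warp-thread sequence along one weft cycle of length $n/d_k$ has exact period $d_s$, whereas you extract it by double-counting the incidence numbers $N_{a,b}$; these are equivalent ways of saying that the $n/d_k$ rectangles of a weft thread are distributed equally among the $d_s$ warp threads.
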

\begin{proof}
Fix a rectangle $R$. The rectangle $R$ lies on a warp thread $U_0$ and a weft thread $V$. Journeying upwards along our weft thread $V$, we obtain a sequence $U_0, U_1,\ldots$ of warp threads, where the $i$th rectangle in our journey lies in warp thread $U_i$. The definition of $\tau$ shows that $\tau(U_i)=U_{i+1}$ for $i\geq 0$. By Lemma~\ref{lem:intersection}, $\tau$ is a $d_s$-cycle and so the sequence $U_0,U_1,\ldots$ has exact period $d_s$. But our weft thread contains $n/d_k$ rectangles and so after $n/d_k$ steps our journey reaches $R$ and begins to repeat. So the sequence $U_0,U_1,\ldots$ has period dividing $n/d_k$. Hence $d_s$ divides $n/d_k$ and so $d_sd_k$ divides $n$, as required.
\end{proof}

Previously, the cases when $\sigma(n,k)=\lfloor(n-1)/k\rfloor-1$ were established by computer search (and so only finitely many parameters were known). Theorem~\ref{thm:gap} allows us to construct infinitely many parameters with this property:

\begin{corollary}
\label{cor:bad_parameters}
There are infinitely many choices of parameters $n$ and $k$ such that $\sigma(n,k)=\lfloor(n-1)/k\rfloor-1$.
\end{corollary}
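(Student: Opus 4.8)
The plan is to combine Theorem~\ref{thm:gap} with the bounds recorded in~\eqref{eqn:MS_conjecture}, and then to exhibit an explicit infinite family of parameters. Recall that for all $n$ and $k$ we have $\sigma(n,k)\in\{\lfloor(n-1)/k\rfloor-1,\lfloor(n-1)/k\rfloor\}$. So to force $\sigma(n,k)=\lfloor(n-1)/k\rfloor-1$ it suffices to rule out the existence of an $(s,k)$-clash-free permutation when $s=\lfloor(n-1)/k\rfloor$. Writing $n=sk+r$ with $1\le r\le k$, Theorem~\ref{thm:gap} tells us that whenever $r<k$, $r<s$, and $d_sd_k$ does not divide $n$, no such permutation exists. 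Hence any infinite family of parameters meeting these three conditions will prove the corollary.

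First I would produce such a family. A convenient choice is to take $k$ even with $k\ge 4$ and set $n=k^2+2$. Then $s=\lfloor(n-1)/k\rfloor=\lfloor(k^2+1)/k\rfloor=k$, and $r=n-sk=2$, so that $r=2$ satisfies both $r<k$ and $r<s=k$. Moreover $d_s=d_k=\gcd(k^2+2,k)=\gcd(2,k)=2$, whence $d_sd_k=4$. Since $k$ is even we have $n=k^2+2\equiv 2\pmod 4$, so $4$ does not divide $n$; that is, $d_sd_k\nmid n$. As a sanity check, $k=4$ gives $n=18$ and $k=6$ gives $n=38$, both of which appear in the list of known parameters with $\sigma(n,k)=\lfloor(n-1)/k\rfloor-1$.

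With this family in hand the conclusion is immediate: for every even $k\ge 4$ the parameters $(n,k)=(k^2+2,k)$ satisfy the hypotheses of Theorem~\ref{thm:gap}, so no $(s,k)$-clash-free permutation exists, forcing $\sigma(n,k)<\lfloor(n-1)/k\rfloor$ and hence $\sigma(n,k)=\lfloor(n-1)/k\rfloor-1$ by~\eqref{eqn:MS_conjecture}. Since distinct even values of $k$ produce distinct values of $n$, this yields infinitely many parameters as claimed.

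I do not expect any genuine obstacle here: the only real decision is the choice of family, and the constraint $r<\min(s,k)$ together with $d_sd_k\nmid n$ is easy to satisfy simultaneously. The one point to verify with care is that $s$ really equals $k$ for the chosen $n$ (equivalently $r<k$), so that the hypotheses of Theorem~\ref{thm:gap} genuinely apply; the family $n=k^2+2$ makes this transparent.
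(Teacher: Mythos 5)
Your proposal is correct and follows essentially the same route as the paper: invoke Theorem~\ref{thm:gap} to rule out an $(s,k)$-clash-free permutation at $s=\lfloor(n-1)/k\rfloor$ for an explicit family in which $d_s$ and $d_k$ are both even while $n\equiv 2\pmod 4$, then conclude via~\eqref{eqn:MS_conjecture}. The only difference is the choice of family --- the paper takes $k\equiv s\equiv 2\pmod 4$ with $6\le s<k$ and $n=sk+s-4$, whereas you take the slightly simpler one-parameter family $n=k^2+2$ with $k\ge 4$ even (so $s=k$, $r=2$) --- and your verification of the hypotheses of Theorem~\ref{thm:gap} is correct.
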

\begin{proof}
Consider, for example, the case when $k\equiv 2\bmod 4$ with $k\geq 10$. Define $s$ to be an integer such that $6\leq s<k$ and $s\equiv 2\bmod 4$. Define $n=sk+s-4$. Note that $\lfloor (n-1)/k\rfloor=s$. In the notation of Theorem~\ref{thm:gap}, we observe that both $d_k$ and $d_s$ are divisible by $2$ but $n$ is not divisible by $4$, so $n$ is not divisible by $d_sd_k$. Theorem~\ref{thm:gap} implies that no $(s,k)$-clash-free permutation exists and so $\sigma(n,k)\not=\lfloor(n-1)/k\rfloor$. Hence $\sigma(n,k)=\lfloor(n-1)/k\rfloor-1$, by~\eqref{eqn:MS_conjecture}.
\end{proof}

\section{An explicit construction of $(s,k)$-clash-free permutations} 
\label{sec:construction}

\begin{theorem}
\label{thm:existence}
Let $n$ and $k$ be fixed positive integers, with $k<n$. Write $s=\lfloor (n-1)/k\rfloor$, so $n=sk+r$ where $1\leq r\leq k$. Define $d_k=\gcd(n,k)$ and $d_s=\gcd(n,s)$. Suppose that $r<k$ and $r<s$. If $d_sd_k$ divides $n$, then there exists an $(s,k)$-clash-free permutation. 
\end{theorem}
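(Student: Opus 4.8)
The plan is to exhibit an explicit formula for $\pi$ and then verify directly that it is a bijection and is $(s,k)$-clash-free. It helps to first record the two easy special cases, as they motivate the general shape. If $d_k=1$ then $\gcd(k,n)=1$, so the linear map $\pi(i)=ki$ is a permutation; moreover for $1\le t\le s-1$ we have $kt\le (s-1)k=n-r-k\le n-k$, whence $\|kt\|_n\ge k$, so $\pi$ is $(s,k)$-clash-free. The case $d_s=1$ follows in the same way after passing to the equivalent $(k,s)$-problem, using the symmetry between $(s,k)$- and $(k,s)$-clash-free permutations noted before Lemma~\ref{lem:rowcolumnfree}. Since $d_k=1$ or $d_s=1$ each force $d_sd_k\mid n$ automatically, the genuinely new content is the case $d_s,d_k\ge 2$, where the pure linear map is neither a bijection (its multiplier is not a unit) nor clash-free.

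For $d_s,d_k\ge 2$ I would look for a \emph{perturbed} linear permutation of the form
\[
\pi(i)=ci+\phi(i\bmod d_sd_k)\pmod n,
\]
where $c$ is a unit modulo $n$ and $\phi\colon\mathbb{Z}_{d_sd_k}\to\mathbb{Z}$ is a bounded correction term. The point is that $\phi$ is a well-defined function on $\mathbb{Z}_n$ precisely because $d_sd_k\mid n$; this is exactly where the arithmetic hypothesis supplied by Theorem~\ref{thm:gap} enters the construction. The multiplier $c$ should be chosen so that the linear part already produces the correct thread slopes: concretely I would arrange $cs$ and $ck$ to be small (in absolute value modulo $n$), so that a single step along a warp thread ($i\mapsto i+s$) or a weft thread raises or lowers $\pi$ by a controlled amount, and so that these drifts sum to $0$ around each thread. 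The function $\phi$ then corrects the rounding, producing the constant gap widths and heights predicted by Lemma~\ref{lem:warpweft}. As a sanity check, for the example $n=40$, $s=k=6$ of Figure~\ref{fig:perm_example} one may take $c=13$ and $\phi=(0,-7,1,-6)$ on residues $(0,1,2,3)$ modulo $d_sd_k=4$, which reproduces the pictured permutation exactly.

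With $c$ and $\phi$ in hand the verification splits in two. First, bijectivity: since $c$ is a unit and $\phi$ is bounded by a quantity smaller than the spacing $n/(d_sd_k)$ between successive values of $ci$ within each residue class modulo $d_sd_k$, the perturbation cannot create collisions, and this reduces to a finite check controlled by the size of $\phi$. Second, clash-freeness: for each shift $t$ with $1\le t\le s-1$ one computes
\[
\pi(i+t)-\pi(i)=ct+\bigl(\phi((i+t)\bmod d_sd_k)-\phi(i\bmod d_sd_k)\bigr),
\]
and must show its Lee distance from $0$ is at least $k$ for every $i$. Because the values $ct$ are well spread (this is the purpose of choosing $c$ carefully) and the bracketed correction is bounded, this amounts to finitely many inequalities, one for each $t$ and each residue of $i$ modulo $d_sd_k$.

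I expect the main obstacle to be twofold. The first difficulty is pinning down $c$ and $\phi$ uniformly in $n,s,k,r$, and proving that a suitable unit $c$ exists whenever $d_sd_k\mid n$; the example suggests $c$ is forced by requiring $cs$ and $ck$ to be small, but the closed forms and the existence proof need care. The second, and sharper, difficulty is the clash-free verification for \emph{small} shifts $t$ and at the ``seams'' where $\phi$ jumps, since these are exactly the near-misses that defeat the pure linear map. A possibly cleaner alternative for this second part is to abandon the formula and instead verify non-overlap geometrically, laying out the rectangles thread by thread as in Lemma~\ref{lem:warpweft} with their constant gap widths and heights, and checking that the weave closes up consistently around the torus — a closure that, by the argument of Theorem~\ref{thm:gap}, is possible exactly when $d_sd_k\mid n$.
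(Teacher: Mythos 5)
Your proposal is a strategy outline, and the steps you defer are precisely the content of the theorem. You never define $c$ and $\phi$ in general, never prove that a suitable unit $c$ exists whenever $d_sd_k\mid n$, and never carry out the clash-freeness verification; you yourself flag these as ``the main obstacles,'' which is accurate --- they \emph{are} the theorem. Your ansatz is not unreasonable (your numerical check for $n=40$, $s=k=6$ is correct, and the paper's construction does have the property that its block coordinates depend only on $x\bmod d_sd_k$), but recognising the likely shape of the answer is not the same as producing it. The paper takes a different and complete route: it decomposes each $x\in\mathbb{Z}_n$ uniquely as $x=(\alpha d_k+i)s+j\bmod n$ with $0\leq i<d_k$, $0\leq j<d_s$, $0\leq\alpha<n/(d_sd_k)$, defines $\pi(x)=jk-i-\alpha d_sd_k$ outright, proves injectivity by a short chain of congruences (reduce mod $d_k$ to get $i=i'$, then mod $d_s$ using $\gcd(k/d_k,d_s)=1$ to get $j=j'$, then recover $\alpha=\alpha'$), and proves clash-freeness \emph{geometrically}: the rectangles of $\pi$ are exhibited as a subset of an explicit non-overlapping packing of the plane by blocks, invariant under a lattice containing $(n,0)$ and $(0,n)$ (this is exactly where $d_sd_k\mid n$ enters, via $u=(d_ks,-d_sd_k)$ and $v=(r,d_sk)$), so the packing descends to the torus and no overlap is possible. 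This sidesteps entirely the ``seam'' inequalities that you correctly identify as the sharp difficulty of your approach, and which you leave unresolved.

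Beyond the missing construction, the one verification you do sketch is wrong as stated. Bijectivity does not follow from ``$\phi$ bounded by the spacing $n/(d_sd_k)$'': since $\phi$ is constant on residue classes modulo $d_sd_k$ and $c$ is a unit, $\pi$ maps the class of $i_0$ bijectively onto the coset $ci_0+\phi(i_0)+\langle d_sd_k\rangle$, so the correct criterion is that the $d_sd_k$ residues $ci_0+\phi(i_0)\bmod d_sd_k$ be pairwise distinct --- a condition on congruences, not on size. Concretely, with $n=40$, $c=13$ and $\phi=(0,-7,1,-7)$ (which satisfies your bound, since all values are below $n/(d_sd_k)=10$) one gets $\pi(3)=13\cdot 3-7=32$ and $\pi(24)=13\cdot 24=32\bmod 40$, a collision. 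Your closing suggestion to verify non-overlap by laying out threads with constant gaps and ``checking that the weave closes up'' does point toward the paper's actual argument, but appealing to ``the argument of Theorem~\ref{thm:gap}'' cannot supply that closure: Theorem~\ref{thm:gap} proves the \emph{necessity} of $d_sd_k\mid n$, whereas the closure you need is the sufficiency direction, and it is established only by the explicit lattice computation above.
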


\begin{proof}
Suppose that $d_sd_k$ divides $n$. We define a function $\pi:\mathbb{Z}_n\rightarrow\mathbb{Z}_n$ as follows. Let $x\in\mathbb{Z}_n$. We claim there exist unique integers $j\in\{0,1,\ldots,d_s-1\}$, $i\in\{0,1,\ldots,d_k-1\}$ and $\alpha\in\{0,1,\ldots ,n/(d_sd_k)-1\}$ such that $x=(\alpha d_k+i)s+j\bmod n$. To see this, we first use the quotient and remainder theorem to write $x=qd_s+j\bmod n$ for unique integers $q\in\{0,1,\ldots,n/d_s-1\}$ and $j\in\{0,1,\ldots,d_s-1\}$. Since $\{qd_s\bmod n: q\in\mathbb{Z}\}=\{fs\bmod n: f\in\mathbb{Z}\}$, we see that $qd_s=fs\bmod n$ for a unique integer $f\in\{0,1,\ldots,n/d_s-1\}$. We write $f=\alpha d_k+i$, where $0\leq i<d_k$ and $0\leq \alpha<n/(d_sd_k)$. Then $x=(\alpha d_k+i)s+j\bmod n$, and our claim follows. We define
\[
\pi(x)=\pi((\alpha d_k+i)s+j)=jk-i-\alpha d_sd_k\in\mathbb{Z}_n.
\]

We first show that $\pi$ is a permutation. It suffices to prove that $\pi$ is injective. Suppose
\begin{equation}
\label{eqn:x}
x=(\alpha d_k+i)s+j \text{ and }x'=(\alpha' d_k+i')s+j'
\end{equation}
are elements of $\mathbb{Z}_n$ such that $\pi(x)=\pi(x')$. We may look at $\pi(x)=jk-i-\alpha d_sd_k$ and $\pi(x')=j'k-i'-\alpha' d_sd_k$ modulo $d_k$, since $d_k$ divides $n$. Using the fact that $d_k$ also divides $k$, we see that $i=i'$ modulo $d_k$ and so $i=i'$. Hence
\begin{equation}
\label{eqn:alpha}
j(k/d_k)-\alpha d_s=j'(k/d_k)-\alpha' d_s\bmod n/d_k.
\end{equation}
Since $d_s$ divides $n/d_k$, we may deduce that
\begin{equation}
\label{eqn:j}
j(k/d_k)=j'(k/d_k)\bmod d_s.
\end{equation}
Since $d_k=\gcd(n,k)$, we have that $k/d_k$ and $n/d_k$ are coprime. But $d_s$ divides $n/d_k$, and so $k/d_k$ and $d_s$ are coprime. Hence~\eqref{eqn:j} implies that $j=j'\bmod d_s$ and so $j=j'$. Now~\eqref{eqn:alpha} implies that $\alpha=\alpha'\bmod{n/(d_sd_k)}$ and so $\alpha=\alpha'$. We have shown that $i=i'$. $j=j'$ and $\alpha=\alpha'$. Hence $x=x'$ by~\eqref{eqn:x}. Hence $\pi$ is injective, as required.

We now show that $\pi$ is $(s,k)$-clash-free. Define integers $a$ and $b$ by $a=r-(d_s-1)$ and $b=d_sd_k-(d_k-1)$. We claim that $0<a<s$ and $0<b<k$. Clearly $b$ is positive. To prove the other inequalities in our claim, we first note that  $r=n-sk$ and $d_sd_k$ divides $n$ and so $r$ is a (positive) multiple of $d_sd_k$. In particular, we see that $a\leq r$ and $b\leq r$. Since $r<s$ and $r<k$, we see that $a<s$ and $b<k$. Finally, since $r\geq d_sd_k$ we see that $a$ is positive, and our claim follows.

Consider the non-overlapping collection of $s\times k$ rectangles in the infinite grid $\mathbb{Z}^2$, as depicted in Figure~\ref{fig:rectanglesinplane}. This packing of rectangles is divided into $d_k\times d_s$ \emph{blocks} of rectangles (with different shadings in the figure). There are $d_s$ rows and $d_k$ columns of rectangles in each block. The gaps above the rectangles are all of width and height $1$, apart from the gaps above the rectangles in the top row of each block which have width $a$, and the gaps above the the left column of each block, which have height $b$. (This is a special case of a packing obtained from two bi-infinite sequences $(a_i)$ and $(b_j)$ of positive integers, with $a_i\leq s$ and $b_i\leq k$, by defining the gap above the $(i,j)$th rectangle to have a gap of width $a_i$ and height $b_j$.)

Fix one of the blocks of rectangles (coloured more darkly in Figure~\ref{fig:rectanglesinplane}), and choose coordinates so that the lower left corner of the lower left rectangle in the block is $(0,0)$. (This is marked as a dot in the most darkly shaded block.) The set of lower left dots in all blocks (the dots in the figure) form a lattice $L$, where
\begin{align*}
L&=\{\alpha u+\beta v:\alpha,\beta\in\mathbb{Z}\},\text{ with}\\
u&=(d_ks,-(d_k-1)-b)=(d_ks,-d_sd_k)\text{ and}\\
v&=((d_s-1)+a,d_sk)=(r,d_sk).
\end{align*}
(If we take a tile consisting of the rectangles in a fixed block and their associated gaps, the images of this tile under $L$ form a lattice tiling of the plane.) 

Let $\cal P$ be the set of coordinates of the lower left corners of all rectangles (not all blocks) in the packing. So
\[
{\cal P}=\{w+(is+j,jk-i):0\leq i\leq d_k-1,0\leq j\leq d_s-1\text{ and }w\in L\}.
\]
Rectangles in the same block are associated with the same element $w\in L$. 
It is clear that the transformation of the plane defined by $x\mapsto x+w$ are symmetries of our packing for any $w\in L$. We note that $(n,0),(0,n)\in L$, since
\begin{align*}
(n,0)&=\left(\frac{k\,}{d_k}\right)u+v,\text{ and}\\
(0,n)&=-\left(\frac{r}{d_sd_k}\right) u+\left(\frac{s\,}{d_s}\right)v.
\end{align*}
So our rectangle packing is periodic of period dividing $n$ in both horizontal and vertical directions, and therefore gives a well-defined packing of the torus $\mathbb{Z}_n^2$ with $s\times k$ rectangles. But the permutation $\pi$ gives rise to rectangles in this packing, since the $s\times k$ rectangles associated with $\pi$ are at coordinates
\[
(x,\pi(x))=\big((\alpha d_k+i)s+j,jk-i-\alpha d_sd_k\big)=\alpha u+(is+j,jk-i)
\]
modulo $n$, which all lie in $\cal P$. Since $\pi$ is a permutation, these rectangles are distinct modulo $n$ and hence non-overlapping. Hence $\pi$ is $(s,k)$-clash-free, as required.
\end{proof}

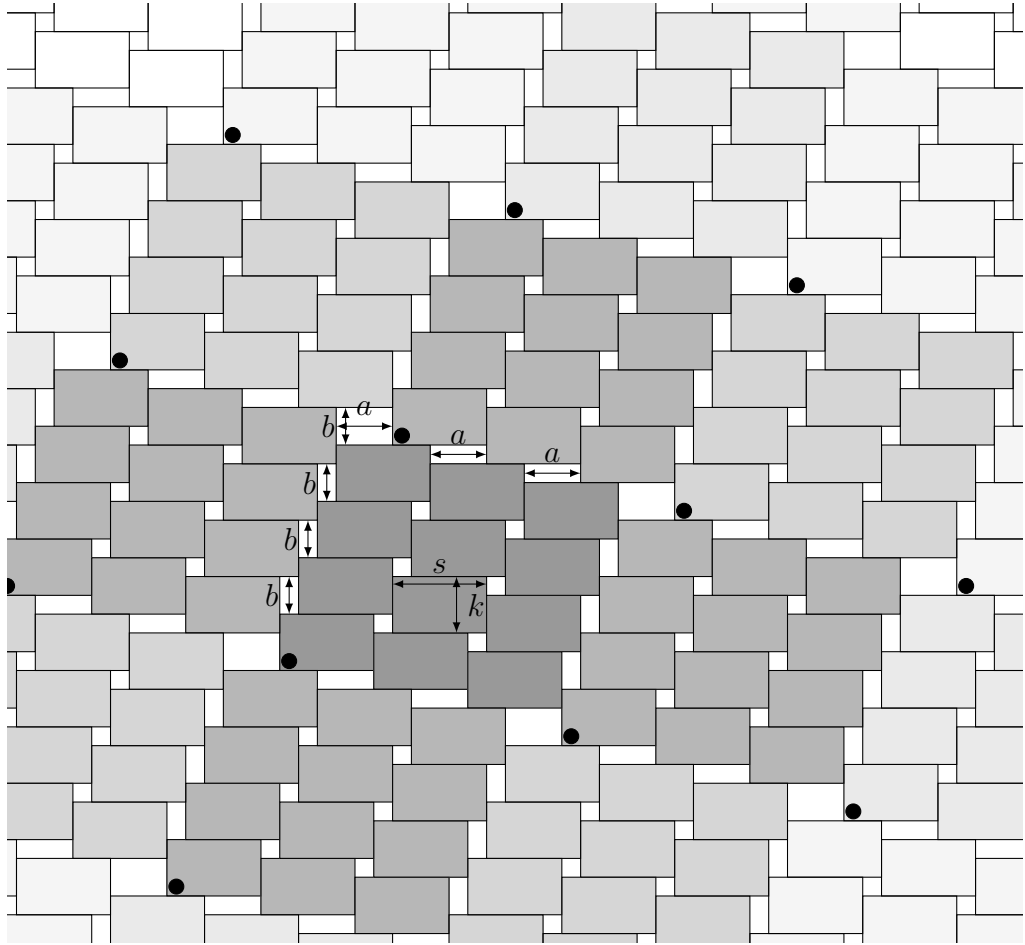
\begin{figure}
\begin{center}
\begin{tikzpicture}[fill=gray!80, scale=0.25,vertex/.style={circle,inner sep=2,fill=black,draw}]
\clip (-14.5,-14.5) rectangle (39.5,35.5);

\ar{0}{0}{1}
\ar{6}{12}{0.7}
\ar{15}{-4}{0.7}
\ar{-6}{-12}{0.7}
\ar{-15}{4}{0.7}
\ar{21}{8}{0.4}
\ar{-9}{16}{0.4}
\ar{-21}{-8}{0.4}
\ar{9}{-16}{0.4}
\ar{12}{24}{0.2}
\ar{27}{20}{0.1}
\ar{-3}{28}{0.1}
\ar{42}{16}{0}
\ar{-18}{32}{0}
\ar{-24}{20}{0.1}
\ar{-30}{8}{0.2}
\ar{-36}{-4}{0.1}
\ar{36}{4}{0.1}
\ar{30}{-8}{0.2}
\ar{24}{-20}{0.1}
\ar{18}{36}{0.1}
\ar{33}{32}{0}
\ar{3}{40}{0}
\ar{-12}{-24}{0.2}
\ar{-27}{-20}{0.1}
\draw [latex-latex] (0.5,3) -- (0.5,5) node[midway,left] {$b$};
\draw [latex-latex] (1.5,6) -- (1.5,8) node[midway,left] {$b$};
\draw [latex-latex] (2.5,9) -- (2.5,11) node[midway,left] {$b$};
\draw [latex-latex] (3.5,12) -- (3.5,14) node[midway,left] {$b$};
\draw [latex-latex] (3,13) -- (6,13) node[midway,above] {$a$};
\draw [latex-latex] (8,11.5) -- (11,11.5) node[midway,above] {$a$};
\draw [latex-latex] (13,10.5) -- (16,10.5) node[midway,above] {$a$};
\draw [latex-latex] (6,4.6) -- (11,4.6) node[midway,above] {$s$};
\draw [latex-latex] (9.4,2) -- (9.4,5) node[midway,right] {$k$};
\draw (0.5,0.5) node [vertex]{};
\draw (6.5,12.5) node [vertex]{};
\draw (12.5,24.5) node [vertex]{};
\draw (-5.5,-11.5) node [vertex]{};
\draw (15.5,-3.5) node [vertex]{};
\draw (21.5,8.5) node [vertex]{};
\draw (27.5,20.5) node [vertex]{};
\draw (10.5,-15.5) node [vertex]{};
\draw (-14.5,4.5) node [vertex]{};
\draw (-8.5,16.5) node [vertex]{};
\draw (-2.5,28.5) node [vertex]{};
\draw (30.5,-7.5) node [vertex]{};
\draw (36.5,4.5) node [vertex]{};
\end{tikzpicture}
\end{center}
\caption{A planar rectangle packing, with some blocks of rectangles highlighted}
\label{fig:rectanglesinplane}
\end{figure}

\section{More structure theory}
\label{sec:morestructure}

As in previous sections, we assume that $n$ and $k$ are fixed integers with $k<n$, and we set $s=\lfloor (n-1)/k\rfloor$ with $n=sk+r$ for $1\leq r\leq k$. We define $d_s$ and $d_k$ as before, and assume that $r<k$ and $r<s$. Futhermore, we suppose that $d_sd_k$ divides $n$, so $(s,k)$-clash-free permutations exist.
In this section, we sketch how the structural information in Section~\ref{sec:structure} may be extended, and the construction in Section~\ref{sec:construction} may be generalised, to provide a tighter characterisation of $(s,k)$-clash-free permutations.

The orbits of the map taking a permutation $\pi$ to the permutation $-\pi$ partition the set of $(s,k)$-clash-free permutations into pairs, one clockwise and one anticlockwise. So, without loss of generality, we may assume that our $(s,k)$-clash-free permutation is clockwise. Furthermore, replacing $\pi$ by the permutation $\pi'$ defined by $\pi'(x)=\pi(x)-\pi(0)$ for all $x\in\mathbb{Z}_n$, we may assume that $\pi(0)=0$. We aim to classify clockwise $(s,k)$-clash-free permutations with $\pi(0)=0$ by bringing them into bijection with the set of objects we call $(s,k,n)$-jumpers. It turns out that the rectangles of these clash-free permutations may be divided up into blocks, generalising the approach in Section~\ref{sec:construction}, but the gaps associated with the rectangles in a block have a more complicated structure that is determined by an $(s,k,n)$-jumper. We use the term jumper to keep within our weaving theme, and because a jumper determines how rectangles are arranged, jumping from a completely regular lattice. We leave the reader to think of a brand name that the jumpers might belong to.

An $(s,k,n)$-\emph{jumper} is a pair $\big((a_i),(b_i)\big)$ of sequences of integers with the following properties:
\begin{enumerate}
\item $(a_i)$ has period dividing $d_s$, and $(b_i)$ has period dividing $d_k$.
\item We have $1\leq a_i<s$ and $1\leq b_i<k$ for $i\geq 0$.
\item The $d_k$ partial sums $\sum_{i=0}^{\ell-1} b_i$  where $0\leq \ell<d_s$ are distinct modulo $d_k$. Moreover, $d_sd_k$ divides $\sigma_b$ where $\sigma_b=\sum_{i=0}^{d_k-1} b_i$. 
\item The $d_s$ partial sums $\sum_{i=0}^{m-1} a_i$  where $0\leq m<d_s$ are distinct modulo $d_s$. Moreover,  $d_sd_k$ divides $\sigma_a$ where $\sigma_a=\sum_{i=0}^{d_s-1} a_i$.
\item Defining $\sigma_a$ and $\sigma_b$ as above, $\sigma_a\sigma_b=d_sd_k r$.
\end{enumerate}
We define $J(s,k,n)$ to be the set of $(s,k,n)$-jumpers. 

For example, suppose $n=216$, $s=10$ and $k=21$. So $n=sk+r$ with $r=6$, and we have $d_s=2$ and $d_k=3$. We can check that $\big((a_i),(b_i)\big)$ is a jumper, Let $(a_i)=(a_0,a_1,\ldots)$ be the sequence of period $2$ with $a_0=1$ and $a_1=5$. Let $(a'_i)$ be the constant sequence of period $1$ where $a'_i=3$ for all $i$. Let $(b_i)=(b_0,b_1,\ldots)$ be the sequence of period $3$ where $b_0=b_1=1$ and $b_2=4$. Let $(b'_i)=(b'_0,b'_1,\ldots)$ be the sequence of period $3$ where $b'_0=1$, $b'_1=4$ and $b'_2=1$. Then is not hard to check that when $(c_i)\in\{(a_i),(a'_i)\}$ and $(d_i)\in\{(b_i),(b'_i)\}$ we have that $\big((c_i),(d_i)\big)$ is an $(s,k,n)$-jumper. 

Under our assumption that $d_sd_k$ divides $n$, $(s,k,n)$-jumpers always exist. Indeed, a $(s,k,n)$-jumper can be constructed as follows. Since $n=sk+r$, we see that $d_sd_k$ divides $r$. Write $r/(d_sd_k)=r_ar_b$ for some positive integers $r_a$ and $r_b$. (This factorisation can be trivial.) Define $\sigma_a=d_sd_kr_a$ and $\sigma_b=d_sd_kr_b$. Let $(a_0,a_1,\ldots)$ be the sequence of period dividing $d_s$ with $a_1=\cdots a_{d_s-1}=1$ and $a_0=\sigma_a-(d_s-1)$. Similarly, let $(b_0,b_1,\ldots)$ be the sequence of period dividing $d_k$ with $b_1=\cdots b_{d_k-1}=1$ and $b_0=\sigma_b-(d_k-1)$. It is easy to check (using our assumption that $r<s$ and $r<k$) that $\big((a_i),(b_i)\big)$ is an $(s,k,n)$-jumper.

We will see below that a jumper $\big((a_i),(b_i)\big)$ determines the size of gaps associated with rectangles in a block of a permutation: the $(u,v)$th rectangle in a block has a gap that is rectangular of width $a_v$ and height $b_u$ to its north-east. For example, a block of a permutation associated with the jumper $\big((a'_i),(b'_i)\big)$ is depicted in Figure~\ref{fig:jumper_example}.

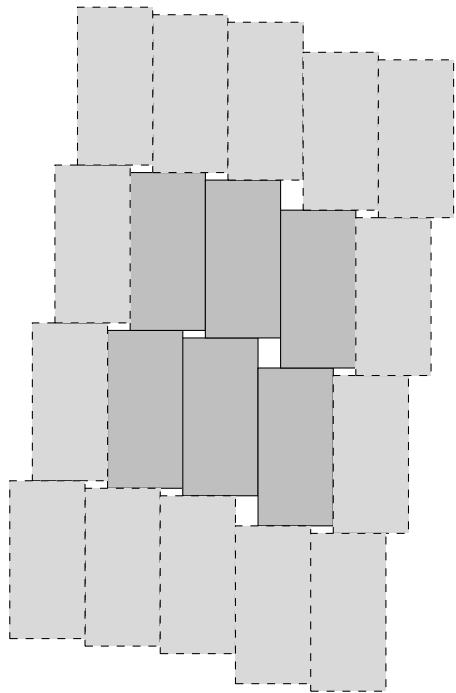
\begin{figure}
\begin{center}
\begin{tikzpicture}[fill=gray!50, scale=0.1,
vertex/.style={circle,inner sep=1.2,fill=black,draw}]
\filldraw (0,0) rectangle (10,21);
\filldraw (10,-1) rectangle (20,20);
\filldraw (20,-5) rectangle (30,+16);
\filldraw (3,21) rectangle (13,42);
\filldraw (13,20) rectangle (23,41);
\filldraw (23,16) rectangle (33,37);
\filldraw[dashed,fill=gray!30] (6,42) rectangle (16,63);
\filldraw[dashed,fill=gray!30] (16,41) rectangle (26,62);
\filldraw[dashed,fill=gray!30] (26,37) rectangle (36,58);
\filldraw[dashed,fill=gray!30] (30,-6) rectangle (40,15);
\filldraw[dashed,fill=gray!30] (33,15) rectangle (43,36);
\filldraw[dashed,fill=gray!30] (36,36) rectangle (46,57);
\filldraw[dashed,fill=gray!30] (-10,1) rectangle (0,22);
\filldraw[dashed,fill=gray!30] (-7,22) rectangle (3,43);
\filldraw[dashed,fill=gray!30] (-4,43) rectangle (6,64);
\filldraw[dashed,fill=gray!30] (-3,-21) rectangle (7,0);
\filldraw[dashed,fill=gray!30] (7,-22) rectangle (17,-1);
\filldraw[dashed,fill=gray!30] (17,-26) rectangle (27,-5);
\filldraw[dashed,fill=gray!30] (27,-27) rectangle (37,-6);
\filldraw[dashed,fill=gray!30] (-13,-20) rectangle (-3,1);
\end{tikzpicture}
\end{center}
\caption{A block in a permutation with jumper $\big((a'_i),(b'_i)\big)$. All gaps have width $3$, and height either $1$ or $4$.}
\label{fig:jumper_example}
\end{figure}

\begin{lemma}
\label{lem:coprime}
Let $n$ and $k$ be fixed integers with $k<n$. Set $s=\lfloor (n-1)/k\rfloor$, and define $r$ by $n=sk+r$ for $1\leq r\leq k$. Define $d_s=\gcd(n,s)$ and $d_k=\gcd(n,k)$. Assume that $d_sd_k$ divides $n$. Let $\big((a_i),(b_i)\big)$ be an $(s,k,n)$-jumper, and define $\sigma_a=\sum_{i=0}^{d_s-1} a_i$ and $\sigma_b=\sum_{i=0}^{d_k-1} b_i$. Then
\[
\gcd\big(n/(d_sd_k),\sigma_b/(d_sd_k)\big)=1.
\]
\end{lemma}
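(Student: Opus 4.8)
The plan is to prove the equivalent statement $\gcd(N,S_b)=1$, where $N=n/(d_sd_k)$ and $S_b=\sigma_b/(d_sd_k)$ are integers (the latter by the divisibility half of jumper property~3). The engine of the argument is the identity $n=sk+r$ read modulo the relevant gcds. First I would record the two observations $\gcd(r,k)=d_k$ and $\gcd(r,s)=d_s$: since $k\mid sk$ we have $d_k=\gcd(n,k)=\gcd(sk+r,k)=\gcd(r,k)$, and the statement for $s$ is proved identically. Next, from $d_s\mid s$ and $d_k\mid k$ we get $d_sd_k\mid sk$, and combined with the hypothesis $d_sd_k\mid n$ this yields $d_sd_k\mid r$. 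Writing $r=d_sd_kr_0$, $s=d_ss''$ and $k=d_kk'$ and substituting into $\gcd(r,k)=d_k$ and $\gcd(r,s)=d_s$, cancellation of $d_k$ and $d_s$ respectively gives $\gcd(d_sr_0,k')=1$ and $\gcd(d_kr_0,s'')=1$. In particular $r_0$ is coprime to both $s''$ and to $k'$, and hence $\gcd(r_0,s''k')=1$.

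With this in hand the lemma falls out from the jumper relations. Writing $\sigma_a=d_sd_kS_a$ and $\sigma_b=d_sd_kS_b$ (integers, by properties~3 and~4), jumper property~5 reads $(d_sd_k)^2S_aS_b=\sigma_a\sigma_b=d_sd_kr=(d_sd_k)^2r_0$, so $S_aS_b=r_0$ and in particular $S_b\mid r_0$. Since $sk/(d_sd_k)=s''k'$, I compute $N=(sk+r)/(d_sd_k)=s''k'+r_0=s''k'+S_aS_b$, whence $\gcd(N,S_b)=\gcd(s''k',S_b)$. As $S_b\mid r_0$ and $\gcd(r_0,s''k')=1$, this final gcd is $1$, which is exactly the claim.

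I do not anticipate a real obstacle: the proof is a short chain of elementary divisibility manipulations. The only genuine insight required is the pair of observations $\gcd(r,k)=d_k$, $\gcd(r,s)=d_s$ together with the reduction $S_b\mid r_0$, after which the cancellation $\gcd(N,S_b)=\gcd(s''k',S_b)$ is forced. It is worth noting that the argument uses only the divisibility assertions in properties~3, 4 and~5 and makes no use of the ``distinct partial sums'' conditions; the main thing to be careful about is checking that each of $N$, $S_a$ and $S_b$ is genuinely an integer before manipulating it.
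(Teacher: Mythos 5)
Your proof is correct and takes essentially the same route as the paper's: both use the divisibility parts of jumper conditions 3--5 to write $\sigma_b/(d_sd_k)$ as a divisor of $r_0=r/(d_sd_k)$, and then reduce the claim to the coprimality of $r_0$ with $(s/d_s)(k/d_k)$ via the identity $n/(d_sd_k)=(s/d_s)(k/d_k)+r_0$. The only difference is in the elementary finish: the paper argues by contradiction with a prime divisor and the maximality in the definitions of $d_s$ and $d_k$, whereas you first establish $\gcd(r,k)=d_k$ and $\gcd(r,s)=d_s$ and cancel directly --- the same underlying fact.
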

\begin{proof}
The third and fourth conditions for being a jumper imply that we be write $\sigma_a=d_sd_kr_a$ and $\sigma_b=d_sd_kr_b$ for some integers $r_a$ and $r_b$. The final condition of being a jumper implies that $r=d_sd_kr_ar_b$ and so in particular $r/(d_sd_k)$ is a multiple of $\sigma_b/(d_sd_k)$. Thus it suffices to show that 
\[
\gcd\big(n/(d_sd_k),r/(d_sd_k)\big)=1.
\]
Suppose, for a contradiction, that there exists a prime $p$ dividing both $n/(d_sd_k)$ and $r/(d_sd_k)$. Then $p$ divides\[
(n-r)/(d_sd_k)=sk/(d_sd_k)=(s/d_s)(k/d_k),
\]
and so $p$ divides $s/d_s$ or $k/d_k$. Suppose that $p$ divides $s/d_s$. Since $p$ divides $n/(d_sd_k)$, we see that $p$ divides $n/d_s$ and so $\gcd(n/d_s,s/d_s)\geq p$. But this contradicts the definition of $d_s$. Similarly, if we assume that $p$ divides $k/d_k$ we derive a contradiction using the definition of $d_k$. So $\gcd\big(n/(d_sd_k),r/(d_sd_k)\big)=1$ and the lemma follows. 
\end{proof}

\begin{theorem}
\label{thm:extremal_perms}
Let $n$ and $k$ be fixed integers with $k<n$. Set $s=\lfloor (n-1)/k\rfloor$, and define $r$ by $n=sk+r$ for $1\leq r\leq k$. Define $d_s=\gcd(n,s)$ and $d_k=\gcd(n,k)$. Assume that $r<k$ and $r<s$. Futhermore, suppose that $d_sd_k$ divides $n$. There is a bijection between the set of clockwise $(s,k)$-clash-free permutations with $\pi(0)=0$ and the set $J(s,k,n)$ of $(s,k,n)$-jumpers.
\end{theorem}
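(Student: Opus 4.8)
The plan is to construct the bijection in both directions and verify they are mutually inverse. Starting from a clockwise $(s,k)$-clash-free permutation $\pi$ with $\pi(0)=0$, I would first use the structural results of Section~\ref{sec:structure} to organise the rectangles into blocks. By Lemma~\ref{lem:threadsize} there are exactly $d_s$ warp threads and $d_k$ weft threads, and by Lemma~\ref{lem:intersection} the maps $\tau$ and $\rho$ cycle through these threads. The rectangle $R_0$ (containing the origin) lies in a distinguished warp thread and weft thread; I would use $\tau$ and $\rho$ to index all threads, so that each rectangle is assigned block-coordinates $(u,v)$ with $0\le u<d_s$ and $0\le v<d_k$ recording how many steps of $\tau$ and $\rho$ separate it from $R_0$. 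To extract the jumper, I would read off the gap widths and heights: by Lemma~\ref{lem:warpweft}, the gap of a rectangle in warp thread $U$ has a width depending only on $U$, and the gap in weft thread $V$ has a height depending only on $V$. Defining $a_v$ to be the common gap width of the rectangles in the weft thread $v$ steps from $R_0$ (read along the $\rho$-direction) and $b_u$ to be the common gap height $u$ steps along the $\tau$-direction gives the candidate pair $\big((a_i),(b_i)\big)$.

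The next step is to verify that this pair satisfies all five jumper conditions. Conditions~1 and~2 should follow directly: the periods divide $d_s$ and $d_k$ because there are only that many threads, and the bounds $1\le a_i<s$, $1\le b_i<k$ come from the fact that gaps are nonempty (each row/column has exactly $r<s,k$ free cells by Lemma~\ref{lem:rowcolumnfree}) yet strictly smaller than the rectangle dimensions. Conditions~3 and~4 encode that $\pi$ is genuinely a permutation: the partial sums of the $b_i$ record the vertical offsets accumulated along a weft thread, and their distinctness modulo $d_k$ is exactly the statement that the $d_s$ warp threads encountered are distinct (i.e. that $\tau$ and $\rho$ interact the way Theorem~\ref{thm:gap} requires). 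The divisibility $d_sd_k\mid\sigma_b$ and $d_sd_k\mid\sigma_a$ reflects that after a full cycle one returns to the starting thread with a displacement that is a lattice vector. Condition~5, the product formula $\sigma_a\sigma_b=d_sd_k r$, is an area count: the total free area in a block must equal the product of the accumulated horizontal and vertical gap lengths, and globally the free cells number $rn$ (by Lemma~\ref{lem:rowcolumnfree}), distributed over $n/(d_sd_k)$ blocks.

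For the reverse direction, given a jumper I would reconstruct the permutation by generalising the construction in the proof of Theorem~\ref{thm:existence}. There, the constant-gap jumper ($a_0=\sigma_a-(d_s-1)$, rest $1$, and similarly for $b$) produced the explicit $\pi$; here I would replace the uniform offsets $is+j\mapsto jk-i$ with offsets governed by the partial sums $\sum a_i$ and $\sum b_i$. Concretely, the lower-left corner of the $(u,v)$th rectangle in the base block sits at horizontal position $\sum_{i<v}(s+a_i)$ adjusted by the vertical drift, and the lattice $L$ generated by the two period-vectors (now $u=(\,\cdot\,,-\sigma_b)$ and $v=(r',\,\cdot\,)$ built from the jumper sums) tiles the plane. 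Lemma~\ref{lem:coprime} is precisely what guarantees the resulting packing has period exactly $n$ (not a proper divisor) so that it descends to a genuine permutation of $\mathbb{Z}_n$ rather than a packing with repeated columns; this coprimality is the analogue of the injectivity argument in Theorem~\ref{thm:existence}.

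The main obstacle I expect is showing the two maps are mutually inverse, and in particular that the forward map lands in $J(s,k,n)$ with the correct normalisation. The delicate point is bookkeeping the indexing: one must check that labelling threads by $\tau$-steps versus $\rho$-steps is consistent with the block-coordinate system, so that the gap read off at block-position $(u,v)$ genuinely depends only on $v$ (for widths) and only on $u$ (for heights) and not on which block one examines. This requires combining Lemma~\ref{lem:warpweft} (constancy of gaps along a thread) with Lemma~\ref{lem:intersection} (transitivity of $\tau$ and $\rho$) to argue that the thread-indexing is well-defined and that the clockwise hypothesis pins down the orientation uniquely. Establishing that every clockwise clash-free permutation with $\pi(0)=0$ arises from exactly one jumper — injectivity of the forward map — then reduces to the observation that the gaps, together with $\pi(0)=0$ and the clockwise orientation, determine the entire packing, since the position of each rectangle is forced by its neighbours (as already noted in the proof of Lemma~\ref{lem:warpweft}, where $S'$ and $T'$ were determined once $R$, $S$, $T$, $R'$ were known).
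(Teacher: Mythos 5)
Your overall route is the same as the paper's: read off gap widths and heights thread-by-thread to build the jumper; get conditions 1--2 from the thread structure, conditions 3--4 from the offsets accumulated along threads together with Lemma~\ref{lem:threadsize}, and condition 5 by an area count against Lemma~\ref{lem:rowcolumnfree}; then invert by generalising the lattice construction of Theorem~\ref{thm:existence}, with Lemma~\ref{lem:coprime} supplying the injectivity of the constructed map. You in fact say more than the paper does about why the two maps are mutually inverse (the paper merely asserts this, while your rigidity observation drawn from the proof of Lemma~\ref{lem:warpweft} is a genuine justification).

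However, there is a systematic transposition of warp and weft in your forward map which, as written, makes it ill-defined. By Lemma~\ref{lem:warpweft} the gap \emph{width} is constant on a \emph{warp} thread (it ``depends only on $U$''), while the gap \emph{height} is constant on a \emph{weft} thread. A single weft thread passes through all $d_s$ warp threads as one moves upwards, so its rectangles do not share a common gap width, and ``the common gap width of the rectangles in the weft thread $v$ steps from $R_0$'' does not exist in general. The paper's assignment is the opposite of yours: $a_i$ is the width attached to the warp thread $\tau^i(U_0)$, giving a sequence of period dividing $d_s$, and $b_i$ is the height attached to the weft thread $\rho^i(V_0)$, of period dividing $d_k$. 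Your indexing also violates condition 1 of the jumper definition: your $(a_v)$ is indexed by the $d_k$ weft threads but is required to have period dividing $d_s$. The same swap recurs in your sketch of condition 3: the vertical offsets $\sum_{i=0}^{\ell-1} b_i$ are accumulated by moving \emph{along a warp thread} (each horizontal step drops by the gap height of the weft thread being crossed), and their distinctness modulo $d_k$ expresses that the $d_k$ weft threads met are distinct, not that ``the $d_s$ warp threads encountered are distinct.'' All of this is mechanically fixable --- exchange the roles of warp/weft and of $\tau$/$\rho$ throughout your forward construction --- after which your argument coincides with the paper's.
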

\begin{proof}
Let $\pi$ be a clockwise $(s,k)$-clash-free permutation with $\pi(0)=0$.  We produce an $(s,k,n)$-jumper as follows.

Consider the rectangle $R$ whose lower left cell is at $(0,0)$. As in the proof of Theorem~\ref{thm:gap}, we may define a sequence $U_0,U_1,\ldots$ of warp threads, where $U_0$ is the warp thread containing $R$ and where $U_{i+1}=\tau(U_i)$ for $i>0$. The proof of Theorem~\ref{thm:gap} shows that this sequence has period exactly $d_s$. Let $a_i$ be the width of a gap above a rectangle in $U_i$. (This width does not depend on the rectangle in $U_i$ we choose, by Lemma~\ref{lem:warpweft}.) The sequence $(a_i)$ has period dividing $d_s$, and $1\leq a_i<s$ for $i\geq 0$. Arguing similarly, by travelling along the warp thread through $R$, we may produce a sequence $V_0,V_1,\ldots$ of weft threads, where $V_0$ is the weft thread containing $R$ and $V_{i+1}=\rho(V_i)$ for $i>0$. This sequence has period exactly $d_k$. Let $b_i$ be the height of a gap above a rectangle in $V_i$ (which is well-defined by Lemma~\ref{lem:warpweft}). The sequence $(b_i)$ has period dividing $d_k$, and $1\leq b_i<k$ for $i\geq 0$. We claim that the pair $\big((a_i),(b_i)\big)$ of sequences is an $(s,k,n)$-jumper.

We have already established the first two conditions required for being a jumper. To show the third and fourth conditions are also satisfied, we argue as follows. Moving along the warp thread containing $R$, we find a sequence $R=R_{(0)},R_{(1)},\ldots $ of rectangles. The lower left cell of rectangle $R_{(\ell)}$ is at position $(\ell s,-\sum_{i=0}^{\ell-1} b_i)$. The rectangles $R_{(0)},R_{(1)},\ldots ,R_{(d_k-1)}$ lie in distinct weft threads $V_0,V_1,\ldots,V_{d_k-1}$, and so the elements $-\sum_{i=0}^{\ell-1} b_i$ are distinct modulo $d_k$, by the last statement of Lemma~\ref{lem:threadsize}. Hence the $d_k$ partial sums $\sum_{i=0}^{x-1} b_i$ are distinct modulo $d_k$. 
The sequence $R=R_{(0)},R_{(1)},\ldots $ of rectangles has period $n/d_s$, again by Lemma~\ref{lem:threadsize}. Comparing the second coordinates of $R_{(0)}$ and $R_{(n/d_k)}$ we find that
\[
0\equiv-\sum_{i=0}^{n/d_s-1}b_i\equiv -(n/(d_sd_k))\sigma_b\bmod n,
\]
and so $(n/(d_sd_k))\sigma_b\equiv 0\bmod d_sd_k$.
Thus the third condition holds. The fourth condition similarly holds, by arguing that the rectangles encountered when moving along the weft thread containing $R$ have lower left corners at positions $(\sum_{i=0}^{m-1} a_i,mk)$ and using the second statement of Lemma~\ref{lem:threadsize}.

It remains to check that the final condition for being a jumper is satisfied. Since the sequence $(b_j)$ has period $d_k$, the gaps of rectangles in warp thread $U_i$ have total area
\[
\sum_{j=0}^{n/d_s-1} (a_ib_j)=a_i\left(\sum_{j=0}^{n/d_s-1} b_j\right)=a_i(n/d_sd_k)\sigma_b.
\]
So the total area of all gaps can be written as
\[
(n/d_sd_k)\sum_{i=0}^{d_s-1}a_i\sigma_b=(n/d_sd_k)\sigma_a\sigma_b.
\]
But Lemma~\ref{lem:rowcolumnfree} shows that every row of our torus contains $r$ free cells, and so there are $nr$ free cells in total. Hence $nr= (n/d_sd_k)\sigma_a\sigma_b$ and so $\sigma_a\sigma_b=rd_sd_k$. This establishes our claim that $((a_i),(b_i))$ is an $(s,k,n)$-jumper, as required.

Let $\big((a_i),(b_i)\big)$ be an $(s,k,n)$-jumper. We construct a permutation $\pi:\mathbb{Z}_n\rightarrow\mathbb{Z}_n$ as follows. Let $x\in\mathbb{Z}_n$. 
By the fourth condition of being an $(s,k,n)$-jumper, and by considering $x$ modulo $d_s$, there exist unique integers $q\in\{0,1,\ldots,n/d_s-1\}$ and  $m\in\{0,1,\ldots,d_s-1\}$ such that $x=qd_s+\sum_{i=0}^{m-1}a_i\bmod n$. As in the proof of Theorem~\ref{thm:existence}, we may write $qd_s=fs\bmod n$ for a unique integer $f\in\{0,1,\ldots,n/d_s-1\}$ and we may write $f=\alpha d_k+\ell$ where $0\leq \ell<d_k$ and $0\leq \alpha<n/(d_sd_k)$. We define $\pi(x)=y$ where
\[
\pi(x)=\pi\left((\alpha d_k+\ell)s+\sum_{i=0}^{m-1}a_i\right)= mk-\sum_{i=0}^{\ell-1} b_i-\alpha \sigma_b.
\]
In order to check that $\pi$ is a permutation, it suffices to show that $\pi$ is injective. Suppose that
\[
mk-\sum_{i=0}^{\ell-1} b_i-\alpha \sigma_b=m'k-\sum_{i=0}^{\ell'-1} b_i-\alpha' \sigma_b\bmod n.
\]
Considering this equality modulo $d_k$, and using the third condition of being an $(s,k,n)$-jumper, we see that $\ell=\ell'$. So
$mk-\alpha \sigma_b=m'k-\alpha' \sigma_b\bmod n$ and hence (using the fact that $d_k$ divides $\sigma_b$)
\[
m(k/d_k)-\alpha (\sigma_b/d_k)=m'(k/d_k)-\alpha' (\sigma_b/d_k)\bmod n/d_k.
\]
Since $d_s$ divides both $n/d_k$ and $\sigma_b/d_k$, we deduce that $m(k/d_k)=m'(k/d_k)\bmod d_s$. But $k/d_k$ is coprime to $n/d_k$ and so $k/d_k$ is coprime to $d_s$. Hence $m=m'\bmod d_s$ and so $m=m'$. We deduce that $\alpha\sigma_b=\alpha'\sigma_b\bmod n$, and so $\alpha\sigma_b/(d_sd_k)=\alpha'\sigma_b/(d_sd_k)\bmod n/(d_sd_k)$. Now $\sigma_b/(d_sd_k)$ is coprime to $n/(d_sd_k)$, by Lemma~\ref{lem:coprime}, and so $\alpha=\alpha'$.

We have shown that $\pi$ is a permutation, it is clear that $\pi(0)=0$, and it is not hard to see that $\pi$ is clockwise. Consider the generalisation of the packing of $s\times k$ rectangles in the plane $\mathbb{Z}^2$ mentioned in Section~\ref{sec:construction}. So the $(\ell,m)$th rectangle in the packing has lower left corner at the cell with position
\[
\left(\ell s+\sum_{i=0}^{m-1}a_i,m k-\sum_{i=0}^{\ell-1}b_i\right).
\]
This rectangle packing is invariant with respect to the integer lattice $\cal L$ generated by $u$ and $v$, where
\begin{align*}
u&=(d_k s,-\sum_{i=0}^{d_k-1} b_i)=(d_k s,-\sigma_b)\text{ and}\\
v&=(\sum_{i=0}^{d_s-1}a_i,d_s k)=(\sigma_a,d_s k).
\end{align*}
We note that
\begin{align*}
\frac{k}{d_k}u+\frac{\sigma_b}{d_sd_k}v&=(ks+r,0)=(n,0)\text{ and}\\
-\frac{\sigma_a}{d_sd_k}u+\frac{s}{d_s}v&=(0,r+ks)=(0,n).
\end{align*}
Now $d_sd_k$ divides $\sigma_a$ and $\sigma_b$, since we have an $(n,s,k)$-jumper. So $(n,0)$ and $(0,n)$ are $\mathbb{Z}$-linear combinations of $u$ and $v$, and therefore lie in $\cal L$. Thus our packing induces a well-defined packing of the torus $\mathbb{Z}_n^2$. The argument in Theorem~\ref{thm:existence} now shows that $\pi$ is $(s,k)$-clash-free.

The process of producing a jumper from a permutation, and the process of producing a permutation from a jumper, are inverse to each other. So we have a bijections between $(s,k,n)$-jumpers and clockwise $(s,k)$-clash free permutations of $\mathbb{Z}_n$ with $\pi(0)=0$, as required.
\end{proof}


\begin{thebibliography}{99}
\bibitem{Alspach} Brian Alspach, `The wonderful Walecki construction', \emph{Bull. Inst. Combin. Appl.} \textbf{52} (2008) 7--20.
\bibitem{BevanHomberger} David Bevan, Cheyne Homberger and Bridget Eileen Tenner, `Prolific permutations and permuted packings: downsets containing many large patterns', \emph{J. Combin. Theory Ser. A} \textbf{153} (2018) 98--121.
\bibitem{Blackburn} Simon R. Blackburn, `Permutations that separate close elements', \emph{J. Combin. Theory Ser. A} \textbf{196} (2023), article 105734.
\bibitem{BlackburnHomberger} Simon R. Blackburn, Cheyne Homberger and Peter Winkler, `The minimum Manhattan distance and minimum jump of permutations', \emph{J. Combin. Theory Ser. A} \textbf{161} (2019) 364--386.
\bibitem{BrualdiKiernan} Richard A. Brualdi, Kathleen P. Kiernan, Seth A.Meyer and Michael W. Schroeder, `Cyclic matching sequencibility of graphs', \emph{Australas. J. Combin.} \textbf{53} (2012), 245--256.
\bibitem{Hertzsprung} Severin Hertzsprung, `En kombinationsopgave', \emph{Tidsskrift for mathematik} \textbf{5} (1887), 13--17.
\bibitem{Kaplansky} Irving Kaplansky, `Symbolic solution of certain problems in permutations', \emph{Bull. Amer. Math. Soc.} \textbf{50} (1944), 906--914.
\bibitem{KreherPastine} Donald L. Kreher, Adri\'an Pastine and Leah Tollefson, `A note on the cyclic matching sequencibility
of graphs', \emph{Australas. J. Combin.} \textbf{61} (2015) 142--146
\bibitem{MammolitiSimpson} Adam Mammoliti and Jamie Simpson, ‘Permutations in which pairs
of numbers are not simultaneously close in position and close in size’,
\emph{Australas. J. Combin.} \textbf{78} (2020) 11--34.
\end{thebibliography}
\end{document}